\providecommand{\keywords}[1]{\textbf{\textit{Index terms---}} #1}
\newtheorem*{GOCP*}{GOCP}
\newtheorem{EOCP}{Equivalent OCP}
\newtheorem{OCP}{OCP}
\newtheorem*{solution*}{Solution}
\newtheorem{theorem}{Theorem}
\newtheorem{proposition}{Proposition}
\newtheorem{corollary}{Corollary}
\newtheorem{example}{Example}
\newtheorem*{proof}{Proof}
\newtheorem*{problem}{Problem}
\newtheorem*{remark1}{Remark}
\newtheorem*{remarks}{Remarks}
\newtheorem{lemma}{Lemma}
\begin{document}
\title{\textbf{ Some Insights on Synthesizing Optimal Linear Quadratic Controller Using Krotov's Sufficiency Conditions }}

\author{Avinash Kumar and Tushar Jain
	\thanks{Avinash Kumar and Tushar Jain are with Indian Institute of Technology Mandi, School of Computing and Electrical Engineering, Himachal Pradesh 175005, India. {email: \tt d16005@students.iitmandi.ac.in, tushar@iitmandi.ac.in}}
}
\date{}
\maketitle
\begin{abstract}

This paper revisits the problem of synthesizing the optimal control law for linear systems with a quadratic cost. For this problem, traditionally, the state feedback gain matrix of the optimal controller is computed by solving the Riccati equation, which is primarily obtained using Calculus of Variations (CoV) and Hamilton-Jacobi-Bellman (HJB) equation based approaches. To obtain the Riccati equation, these approaches requires some assumptions in the solution procedure, i.e. the former approach requires the notion of co-states and then their relationship with states is exploited to obtain the closed form expression for optimal control law, while the latter requires an \textit{a-priori} knowledge regarding the optimal cost function. In this paper, we propose a novel method for computing linear quadratic optimal
control laws by using the global optimal control framework introduced by V.F. Krotov. As shall be illustrated in this article, this framework does not require the notion of co-states and any \textit{a-prior} information regarding the optimal cost function. Nevertheless, using this framework, the optimal control problem gets translated to a non-convex optimization problem. The novelty of the proposed method lies in transforming the non-convex optimization problem into a convex problem. The insights along with the future directions of the work are presented and gathered at appropriate locations in the article. Finally, numerical results are provided to demonstrate the proposed methodology.
\end{abstract}
\keywords{Optimal control, sufficient optimality conditions, global optimality, linear systems, Ricatti equations, Krotov function}

\section{Introduction}
Optimal control theory is a heavily explored and still developing field of control engineering where  the objective is to design a control law so as to optimize (maximize or minimize) performance index (cost functional) while driving the states of a dynamical system to zero (Regulation problem) or to make output  track a reference trajectory (Tracking problem) \cite{Des02}. The generic optimal control problem (GOCP) is given as:
\vspace{0.2 cm}

({\textit{Notation: Throughout this article the small alphabets represent scalar quantities, small bold alphabets represent vector quantities  and the capital alphabets represent matrices.}})
\begin{GOCP*} \label{gocp}
Compute an optimal control law $\boldsymbol{u}^*(t)$ which minimizes (or maximizes) the performance index/cost functional:
\begin{equation}  \label{g_p_i}
J(\boldsymbol{x}(t),\boldsymbol{u}(t),t)= l_f(\boldsymbol{x}(t_f)) + \int_{t_0}^{ t_f} l(\boldsymbol{x}(t),\boldsymbol{u}(t),t) dt 
\end{equation}
subject to the system dynamics
$ \dot{\boldsymbol{x}}(t) = f(\boldsymbol{x}(t),\boldsymbol{u}(t),t) \ \text{with } {\boldsymbol{x}}(t_0) \in \mathbb{R}^n; \ t \in  [t_0,t_f]  $
to give the desired optimal trajectory $\boldsymbol{x}^*(t)$. Here, $l(\boldsymbol{x}(t),\boldsymbol{u}(t),t)$ is the running cost, $l_f(\boldsymbol{x}(t_f))$ is the terminal cost, $\boldsymbol{x}(t) \in  \mathbb{X} \subset \mathbb{R}^n$ is the state vector and $\boldsymbol{u}(t) \in \mathbb{U} \subset \mathbb{R}^m$ is the control input vector to be designed. Also, $l_f : \mathbb{R}^n \rightarrow \mathbb{R} $ and $l : \mathbb{R}^n \times \mathbb{R}^m \times [t_0, t_f] \rightarrow \mathbb{R}$ are continuous. 
\end{GOCP*}

Since, the aforementioned problem corresponds to optimization of the cost functional subject to dynamics of the system considered and possibly constraints on input(s) and/or state(s) the Calculus of Variations(CoV) is generally employed  to address optimal control design problems \cite{Des02,Kro95}. The assumption of an optimal control is usually the first step while using CoV techniques. Subsequently, the conditions which must be satisfied by such an optimal control law are derived. Hence, only  necessary conditions are found and sufficiency of these conditions is not guaranteed. Furthermore,  the obtained control law is usually only locally  optimum. Nevertheless, there are results available in the literature which provide restrictions under which the necessary conditions indeed  become sufficient and the global optimal control law is obtained \cite{Olvi66,Kva13,Kam71}. Note that, in solving optimal control design problems, the CoV method uses the notion of so-called co-states (which are not actually present in the system). Moreover, in the solution procedure, the existence of a linear relationship between the states and co-states is exploited to compute the closed form of optimal control law (this is particularly true for linear quadratic problems). See \cite{kalman1960contributions} for more details.

Alongside CoV, another tool, namely dynamic programming (DP) (introduced by Bellman), has also been explored to solve optimal control problems. The application of DP to optimal control design problems for continuous linear systems leads to the celebrated Hamilton-Jacobi-Bellman (HJB) equation which also gives a necessary condition for optimality \cite{Des02}. Nevertheless, this equation also provides sufficiency under the following mild conditions on the optimal cost function \cite{Kro95,athans2013optimal}:
$(a)${ there exists an continuously differentiable optimal cost function} and
$(b)${ the gradient of cost function with respect to state vector equals co-state which corresponds to the optimal trajectory.}
For example, consider the optimal control design problem for the system \cite{beard1998approximate}-
$ 	\dot{x}(t)=x(t)u(t)  $
with performance measure as
$ 	J\big(x(t),u(t),t\big)= \int_{1}^{\infty}\big[ x^2(t)+u^2(t)\big] dt.  $
For this problem, the optimal cost function  is $J^\ast(x^\ast(t),u^\ast(t),t) = |x^\ast(t)| $ and hence the HJB equation is not defined at $x=0$ because of non-differentiability of $J^\ast(x^\ast(t),u^\ast(t),t)$. From the aforementioned observations, a solution method for optimal control design which does not require these conditions is desirable. As shall be demonstrated in this article,  Krotov solution methodology is indeed such a methodology. In fact, this methodology  provides sufficient conditions for the existence of the global optimal control law without using the notion of co-states and any \textit{a-priori} information regarding the optimal cost function \cite{Kro95}.

Starting in the sixties, the results on sufficient conditions  for the global  optimum of optimal control problem were published by Vadim Krotov \cite{krotov1962methods,Kro88}. The conditions have been derived from the so-called of extension principle \cite{gurman2016certain}. The first step, while employing these conditions, is a total decomposition of the OCP with respect to time via an appropriate selection of the so-called Krotov function  \cite{Kro95,HalAgr17,Raf18}. Once such a decomposition is obtained, the problem is reduced to a family of independent elementary optimization problems parameterized in time $t$. It has been shown in \cite{Kro95} that the two problems- original OCP and the  optimization problem resulting from decomposition- are completely equivalent. The method, however, is abstract in the sense that the selection of Krotov function is not straightforward and the selection is very problem specific \cite{gurman2016certain}. A number of works have used Krotov methodology for solving OCPs encountered in control of structural vibration problems in buildings \cite{HalAgr17}, MEMS-based energy harvesting problem \cite{Raf18}, magnetic resonance systems \cite{vinding2012}, quantum control \cite{schirmer2011}, computation of extremal space trajectories \cite{azimov2017analytical,krotov2005national} etc. However, the equivalent optimization problems in all these articles are non-convex and hence the iterative methods, one of them being Krotov method, are employed to obtain their solutions. To address this issue, we propose a novel method to directly (non-iteratively) synthesize optimal controllers for linear systems using Krotov sufficient conditions. The innovation in our approach lies in transforming the non-convex optimization problem into a convex optimization problem by a proper selection of Krotov functions.

Some related (preliminary) results of the approach were reported in \cite{Avi19icc} for linear quadratic regulation problem. In \cite{Avi19acc}, the methodology was demonstrated for finite horizon linear quadratic  optimal control problems and the Krotov function was taken to be a \textit{positive definite quadratic function}. This assumption was relaxed in \cite{Avi19ccta} and preliminary results were reported. This article is presents a rather detailed discussion of the results along with extension of the methodology to infinite horizon problems. It differs from \cite{Avi19acc} in that the Krotov function here is taken to be a quadratic function, neither symmetry nor positive definiteness is imposed upon this function.

Considering the aforementioned points and given the fact that Krotov framework remains highly unexplored in literature (to the best of author's knowledge), this article may serve as a background for further exploration of this framework to more involved control problems  viz. nonlinear optimal control, distributed optimal control etc. In summary, the contribution of this work is as follows: $(i)$ It exhaustively describes the methodology for solving the standard linear quadratic optimal control problems (both finite and infinite horizon) using Krotov sufficient conditions, $(ii)$ It solves the equivalent optimization problems via convexity imposition: a technique which is not used in the previous work which use Krotov conditions and then the analysis of resulting LMI is  presented and $(iii)$ It provides the insights which result upon synthesizing the optimal control laws and may also lay the foundation upon which the Krotov sufficient conditions may be employed for solving more complex optimal control problems viz. nonlinear optimal control problems. Note that the a preliminary work in this direction, specifically, for solving scalar nonlinear optimal control problems using Krotov conditions was reported in \cite{Avi19ecc}.

The rest of the article is organized as follows. In Section \ref{pf}, the preliminaries of linear quadratic optimal control problems are discussed and the solution methodologies based on the Calculus of Variations (CoV) based method and Hamilton-Jacobi-Bellman (HJB) equation based method are outlined. The  assumptions as encountered in these approaches are also discussed. In Section \ref{main_res}, the background literature of Krotov sufficient conditions and their application to the problems considered is detailed. This section also discusses a number of insights which result while solving the considered optimal control problems. These insights are gathered as remarks at appropriate locations in this section. The Krotov iterative method is also discussed in brief in this section. In Section \ref{ne}, the proposed method is demonstrated through numerical examples. Finally, the concluding remarks and future scope of the work presented in Section \ref{conc}.

\section{Preliminaries and Problem Formulation} \label{pf}
In this section, solution procedures of Linear Quadratic Regulation (LQR) and Linear Quadratic Tracking (LQT) problems using CoV and HJB equation based approaches are briefly discussed in order to concretely highlight the assumptions used in these approaches. 
\begin{OCP} \label{prob_lqr}
(Finite Horizon LQR problem):\\ 
{Compute an optimal control law $\boldsymbol{u}^*(t)$ which minimizes the quadratic performance index/cost functional:
\begin{align*}
J(\boldsymbol{x}(t),\boldsymbol{u}(t),t)= 0.5\Big[ \boldsymbol{x}^T(t_f) F(t_f) \boldsymbol{x}(t_f)\Big] 
+ 0.5 \Big[ \int_{t_0}^{ t_f} \boldsymbol{x}^T(t)Q(t){\boldsymbol{x}}(t)+\boldsymbol{u}^T(t)R(t)\boldsymbol{u}(t) dt \Big]
\end{align*}
subject to the system dynamics
$ \dot{\boldsymbol{x}}(t)=A(t)\boldsymbol{x}(t)+B(t)\boldsymbol{u}(t)  $
and drives the states of system to zero (Regulation).
Here $ \boldsymbol{x}(t_0)=\boldsymbol{x}_0$ is given, $\boldsymbol{x}(t_f)$ is free  and  $t_f$ is fixed.
Also, $Q(t) \succeq0 \text{ and } R(t)  \succ0 \  \forall \ t \in [t_0,t_f].$
}
\end{OCP}
The solution using CoV technique comprises of four major steps:
\begin{enumerate}[i)]
\item{\textit{Formulation of Hamiltonian function:} The Hamiltonian for the considered problem is given as:
\begin{align*}  \label{hamil}
\mathcal{H}(\boldsymbol{x},\boldsymbol{u},{\boldsymbol{\lambda}})=  \frac{1}{2} \boldsymbol{x}^{T}(t) Q(t) \boldsymbol{x}(t)+ \frac{1}{2} \boldsymbol{u}^{T}(t) R(t) \boldsymbol{u}(t)
+ \boldsymbol{\lambda}^{T}(t) [A(t) \boldsymbol{x}(t)+B(t)\boldsymbol{u}(t)] 
\end{align*}
where $\boldsymbol{\lambda}(t)$ is the co-state vector.
}
\item{\textit{Obtaining Optimal Control law using first order necessary condition:} The optimal control law $\boldsymbol{u}^{*}(t)$ is obtained as:
\begin{align*}
\frac{\partial \mathcal{H}}{\partial \boldsymbol{u}}= 0 
\implies \boldsymbol{u}^\ast(t)= - R^{-1}(t) B^{T}(t) \boldsymbol{\lambda}^\ast(t)
\end{align*}
}
\item{\textit{Use of State and Co-state Dynamics and a transformation to connect state and co-state for all $t \in [t_0,t_f]$:} The boundary conditions (i.e $t_f$ being fixed and $\boldsymbol{x}(t_f)$ being free) lead to the following boundary condition on $\boldsymbol{\lambda}(t)$:
$
   \boldsymbol{\lambda}^\ast(t_f)=F(t_f) \boldsymbol{x}^\ast(t_f)
 $. Then, the following transformation to connect co-state and state is used:
\begin{equation} 
\label{lambda_all}   
\boldsymbol{{\lambda}}^\ast(t)= P(t) \boldsymbol{x}^\ast(t)
  \end{equation}
to compute closed form the optimal control law as:
\begin{equation*}  \boldsymbol{u}^\ast(t)= - R^{-1}(t) B^{T}(t) P(t) \boldsymbol{x}^\ast(t) \end{equation*}}
\item{ \textit{Obtaining Matrix Differential Riccati Equation:} Finally, taking the derivative of equation \eqref{lambda_all} and substituting the state and co-state relations the following matrix differential Riccati equation (MDRE) is obtained which $P(t)$ must satisfy for all $t \in [t_0,t_f]$:
\begin{align*}  \dot{P}(t)+P(t)A(t)+A^{T}(t)P(t)+Q(t) - P(t) B(t) R^{-1}(t) B^{T}(t)P(t)=0   \end{align*}

}

\end{enumerate}
Further, the solution using HJB equation requires 
\begin{equation} \label{hjb_eqn}
 \frac{ \partial J^{\ast}} {\partial t}   + \mathcal{H} \left[  \boldsymbol{x}^\ast (t), \frac{\partial J^*}{\partial \boldsymbol{x}^*},\boldsymbol{u}^*(t), t \right]=0 \ 	
  \ \forall t \in  \ [t_0,t_f]
\end{equation} 
where $J^\ast(\boldsymbol{x}^\ast(t),t)$ is the optimal cost function, 
and $\boldsymbol{u}^*(t)$ is the optimal control law. To solve \eqref{hjb_eqn}
the boundary condition is given as :
$
\label{J_b_c}    J^\ast (\boldsymbol{x}^\ast(t_f),t_f) = \frac{1}{2} \boldsymbol{x}^{\ast T}(t_f) F(t_f) \boldsymbol{x}^\ast(t_f) $
with $J^\ast(\boldsymbol{x}^{\ast}(t),t)$ \textit{assumed} to be
\begin{equation} 
\label{eq_P}
J^\ast(\boldsymbol{x}(t),t)=\frac{1}{2} \boldsymbol{x}^{\ast T}(t) P(t)\boldsymbol{x}^\ast(t)
\end{equation}
where $P(t)$ is a real, symmetric, positive-definite matrix to be determined.
Substituting \eqref{eq_P} into \eqref{hjb_eqn}, we get:
\begin{multline*}
\frac{1}{2} \boldsymbol{x}^{\ast T}(t) \dot{P}(t)\boldsymbol{x}^\ast (t)+\frac{1}{2}\boldsymbol{x}^{\ast T}(t)P(t)A(t)\boldsymbol{x}^\ast(t)+ \frac{1}{2} \boldsymbol{x}^{\ast T}(t) Q(t)\boldsymbol{x}^\ast (t) +\frac{1}{2} \boldsymbol{x}^{\ast T}(t)A^{T}(t)P(t)\boldsymbol{x}^\ast(t) \\
-\frac{1}{2} \boldsymbol{x}^{\ast T}(t) P(t) B(t) R^{-1}(t)B^{T}(t)P(t) \boldsymbol{x}^\ast (t) =0
\end{multline*}
This equation is valid for {any} $\boldsymbol{x}^\ast(t)$, if:
\begin{align*}
 \dot{P}(t) +Q(t)+P(t)A(t)+A^{T}(t)P(t)- P(t)B(t) R^{-1}(t)B^{T}(t) P(t) =0
\end{align*}
Finally, $P(t_f)=F(t_f)$ and thus the solution is same as that obtained using CoV. Summarizing above, 
the global optimal control law is given by 
\begin{equation*}
\boldsymbol{u}^\ast(t)= - R^{-1}(t) B^{T}(t) P(t) \boldsymbol{x}(t)
\end{equation*}
 where $P(t)$ is the solution of
 \begin{align*}
 \dot{P}(t)+P(t)A(t)+A^{T}(t)P(t) +Q(t) - P(t)B(t)R^{-1}(t)B^{T}(t)P(t)=0
  \end{align*}
   with boundary condition $P(t_f)=F(t_f)$.
\begin{OCP} \label{prob_lqt}
(Finite Horizon LQT problem):\\ 
{Compute an optimal control law $\boldsymbol{u}^*(t)$ which minimizes the quadratic performance index/cost functional:
\begin{align*} J=  0.5\Big[ \boldsymbol{e}^T(t_f) F(t_f) \boldsymbol{e}(t_f)\Big]+ 0.5 \Big[ \int_{t_0}^{ t_f} \boldsymbol{e}^T(t)Q(t)\boldsymbol{e}(t)+\boldsymbol{u}^T(t)R(t)\boldsymbol{u}(t) dt \Big]
\end{align*}
where 
$\boldsymbol{e}(t) \triangleq \boldsymbol{z}(t)-\boldsymbol{y}(t)$
subject to the system dynamics
  \begin{align*}
\dot{\boldsymbol{x}}(t)&=A(t)\boldsymbol{x}(t)+B(t)\boldsymbol{u}(t) \\
\boldsymbol{y}(t)&=C(t)\boldsymbol{x}(t)
\end{align*}
such that the output $\boldsymbol{y}(t)$ tracks the desired reference trajectory $\boldsymbol{z}(t)$.
Here
 $\boldsymbol{e}(t) \triangleq \boldsymbol{y}(t)-\boldsymbol{z}(t)$ is the error vector, $\boldsymbol{x}(t_0)=\boldsymbol{x}_0$ is given, $\boldsymbol{x}(t_f)$ is free and $t_f$ is fixed.
Also, $Q(t) \succeq0 \text{ and } R(t) \succ  0 \ \forall \ t \in [t_0,t_f].$
}
\end{OCP}
Similarly to the solution of LQR, the CoV and HJB equation based approaches yield the optimal control law as:
\begin{align*}
&\boldsymbol{u}^*(t)= - R^{-1}(t) B^T(t) P(t) \boldsymbol{x}^*(t) + R^{-1}(t) B^T(t)\boldsymbol{g}(t) \nonumber 
\end{align*}
where ${P}(t)$ and  $\boldsymbol{g}(t)$ satisfy:
\begin{align*}
 \dot{P}(t)+P(t)A(t)+A^T(t)P(t) + C^T(t)Q(t)C(t)
-P(t)B(t)R^{-1}(t)B^T(t)P(t)=0 \ \forall t \in [t_0,t_f]
\end{align*}
and
\begin{align*}
\dot{\boldsymbol{g}}(t)+\big[ A(t) - B(t)R^{-1}(t)B^T(t)P(t) \big]^{T}\boldsymbol{g}(t)
+C^T(t)Q(t)\boldsymbol{z}(t)= \boldsymbol{0} \ \forall t \in [t_0,t_f]
\end{align*}
with
$P(t_f)=C^T(t_f)F(t_f)C(t_f)$ 
and
$\boldsymbol{g}(t_f)=C^T(t_f)F(t_f)C(t_f) $
respectively.
Note, that in HJB approach  the optimal cost function has to be guessed.

Although the CoV and HJB based approaches as described above are widely employed for solving OCPs, there are some assumptions associated with these approaches in their solution procedure. Specifically, the CoV based approach uses the notion of and co-states and their relationship with states for all time \eqref{lambda_all} to compute the optimal control law. Similarly, the HJB based approach requires the existence of the continuously differentiable  optimal cost function and that its gradient with respect to state is the co-state corresponding to the optimal trajectory \cite{athans2013optimal}.  Thus, the information about the optimal cost function must be known \textit{a priori}. The angle of our attack is to synthesize an optimal control law using Krotov sufficient conditions, where the above issues are not encountered in the solution procedure. However, it is well known that the control law using these conditions is synthesized through an iterative procedure. The main non-trivial issue to be tackled is to obtain non-iterative solutions of optimal control problems using Krotov conditions. The next section answers this question for linear quadratic optimal control problems.

\section{Computation of Optimal Control Laws } \label{main_res}
 In this section, solutions of the LQR and LQT problems using Krotov sufficient conditions are detailed.

 \subsection{Krotov  Sufficient Conditions in Optimal Control}
 The underlying idea behind Krotov sufficient conditions for global optimality of control processes is the total decomposition of the original OCP with respect to time using the so-called {extension principle} \cite{Kro95}.

 \subsubsection{Extension Principle}

 The essence of the extension principle is to replace the original optimization problem with complex relations and/or constraints  with a simpler one such that they are excluded in the new problem definition but the solution of the new problem still satisfies the discarded relations \cite{gurman2016certain}.

 Consider  a scalar valued functional $I(\boldsymbol{v})$ defined over a set $\mathbb{M}$ (i.e. $\boldsymbol{v} \in \mathbb{M}$) and the optimization problem  as\\
\textit{Problem $(i)$:}
 Find $\bar{\boldsymbol{v}}$ such that $d=\inf_{\boldsymbol{v} \in \mathbb{M}}I(\boldsymbol{v})$ where $d \triangleq I(\bar{\boldsymbol{v}}) $. 
 
 \noindent Instead of solving the Problem $(i)$, another equivalent optimization problem is solved.  
 Let $L$ denote the equivalent representation of the original cost functional.
 Then, a new problem is formulated over $\mathbb{N}$, a super-set of $\mathbb{M}$ as:
 
 \noindent \textit{Equivalent Problem $(i)$:} 
 Find $\bar{\boldsymbol{v}}$ such that $e= \inf_{\boldsymbol{v }\in \mathbb{N}}L(\boldsymbol{v})$ where $e \triangleq L(\bar{\boldsymbol{v}}) $. 
 
 The equivalent problem is also called the extension of the original problem. The key idea is that solving the equivalent problem may be much simpler than solving the original problem. The method of choosing of the \textit{equivalent functional} $L$ is not unique and the selection is generally made according to specifications of the problem under consideration. This \textit{freedom} in the selection of the equivalent functional can be exploited to tackle the generic non-convex optimization problems. Also, it is necessary to ensure that:
 $
 I(\boldsymbol{v})=L(\boldsymbol{v}) \  \forall \ \boldsymbol{v}\in \mathbb{M}
 $
 so that the optimizer $\bar{v}$ is actually the optimizer of the original Problem $(i)$ \cite{Kro95}. 
  Clearly, the application of extension principle requires \textit{appropriate} selection of the equivalent functional $L$ and the set $\mathbb{N}$.

 \subsubsection{Application of Extension Principle to Optimal Control Problems}

The equivalent problem of the GOCP which is obtained by the application of the extension principle is given in the following theorem which provides a concrete definition of the equivalent functional $L$ and the set $\mathbb{N}$  for this problem.
\begin{theorem}\label{thm1}
For the GOCP, let $q(\boldsymbol{x}(t),t)$ be a continuously differentiable function. Then, there is an equivalent representation of \eqref{g_p_i} given as:
\begin{equation*} 
J_{eq}(\boldsymbol{x}(t),\boldsymbol{u}(t))= s_f(\boldsymbol{x}(t_f)) +q(\boldsymbol{x}(t_0),t_0)+ \int_{t_0}^{ t_f} s(\boldsymbol{x}(t),\boldsymbol{u}(t),t) dt 
\end{equation*}
where
\begin{align*}
s(\boldsymbol{x}(t),\boldsymbol{u}(t),t)  & \triangleq  \frac{\partial q}{\partial t}+ \frac{\partial q}{\partial \boldsymbol{x}} {f} (\boldsymbol{x}(t), \boldsymbol{u}(t),t)+ l(\boldsymbol{x}(t),\boldsymbol{u} (t),t) \\
s_f(\boldsymbol{x}(t_f) &\triangleq l_f(\boldsymbol{x}(t_f)) -q(\boldsymbol{x}(t_f),t_f)
\end{align*}
\end{theorem}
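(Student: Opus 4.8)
The plan is to show that $J_{eq}$ and $J$ coincide identically on the admissible set $\mathbb{M}$ of state-control pairs that satisfy the dynamics $\dot{\boldsymbol{x}}(t) = f(\boldsymbol{x}(t),\boldsymbol{u}(t),t)$, which is precisely the equivalence condition $I(\boldsymbol{v}) = L(\boldsymbol{v})$ for all $\boldsymbol{v} \in \mathbb{M}$ demanded by the extension principle. The central observation is that the integrand $s$ has been engineered so that its first two terms reconstitute the total time derivative of $q$ along a trajectory, after which the fundamental theorem of calculus does all the work.

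First I would compute the total derivative of $q(\boldsymbol{x}(t),t)$ with respect to $t$ along an arbitrary admissible trajectory. By the chain rule, justified by the continuous differentiability of $q$,
\[
\frac{d}{dt} q(\boldsymbol{x}(t),t) = \frac{\partial q}{\partial t} + \frac{\partial q}{\partial \boldsymbol{x}} \dot{\boldsymbol{x}}(t).
\]
Substituting the dynamics $\dot{\boldsymbol{x}}(t) = f(\boldsymbol{x}(t),\boldsymbol{u}(t),t)$ turns the right-hand side into $\frac{\partial q}{\partial t} + \frac{\partial q}{\partial \boldsymbol{x}} f(\boldsymbol{x}(t),\boldsymbol{u}(t),t)$, so that the definition of $s$ yields exactly $s(\boldsymbol{x}(t),\boldsymbol{u}(t),t) = \frac{d}{dt} q(\boldsymbol{x}(t),t) + l(\boldsymbol{x}(t),\boldsymbol{u}(t),t)$.

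Next I would integrate this identity over $[t_0,t_f]$ and apply the fundamental theorem of calculus to the total-derivative term, giving
\[
\int_{t_0}^{t_f} s \, dt = q(\boldsymbol{x}(t_f),t_f) - q(\boldsymbol{x}(t_0),t_0) + \int_{t_0}^{t_f} l \, dt.
\]
Substituting this, together with the definition $s_f(\boldsymbol{x}(t_f)) = l_f(\boldsymbol{x}(t_f)) - q(\boldsymbol{x}(t_f),t_f)$, into the expression for $J_{eq}$, the two $q(\boldsymbol{x}(t_0),t_0)$ terms and the two $q(\boldsymbol{x}(t_f),t_f)$ terms cancel in telescoping fashion, leaving $J_{eq} = l_f(\boldsymbol{x}(t_f)) + \int_{t_0}^{t_f} l \, dt = J$.

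There is no genuine analytical obstacle; the only point requiring care is the bookkeeping of boundary terms — ensuring that the terms produced by the fundamental theorem of calculus are paired correctly with the explicit $q(\boldsymbol{x}(t_0),t_0)$ added to $J_{eq}$ and the $-q(\boldsymbol{x}(t_f),t_f)$ hidden inside $s_f$, so that the cancellation is complete. The single hypothesis that must be kept explicit is the continuous differentiability of $q$, which is exactly what legitimizes both the chain rule and the fundamental theorem of calculus. It should also be emphasized that the equality $J_{eq} = J$ holds only on $\mathbb{M}$, i.e. only where the dynamics are used to replace $\dot{\boldsymbol{x}}$ by $f$; off this set the two functionals generally differ, and it is precisely this \emph{freedom} in choosing $q$ that the extension principle later exploits.
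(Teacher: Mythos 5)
Your proof is correct and complete: the chain-rule identity $\frac{d}{dt}q(\boldsymbol{x}(t),t)=\frac{\partial q}{\partial t}+\frac{\partial q}{\partial \boldsymbol{x}}f$ along admissible trajectories, followed by the fundamental theorem of calculus and the telescoping cancellation of the boundary terms, is exactly the standard argument. The paper itself does not reproduce a proof but only cites Krotov's monograph, where the same computation appears, so your proposal matches the intended route.
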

\begin{proof}
See \cite[Section $2.3$]{Kro95} for the proof.
\end{proof}
The equivalent functional $J_{eq}(\boldsymbol{x}(t),\boldsymbol{u}(t))$ leads to a sufficient condition for the global optimality of an admissible process  $\text {i.e. a } \left [ \boldsymbol{x}(t),\boldsymbol{u}(t) \right ] \text{ pair which satisfies the dynamical equation } \dot{\boldsymbol{x}}(t)= f(\boldsymbol{x}(t),\boldsymbol{u}(t),t)$ and the input/state constraints.

\begin{theorem}\textit{(Krotov Sufficient Conditions)} \label{thm2} 
If $\big({\boldsymbol{x}^*(t)},{\boldsymbol{u}^*(t)}\big)$ is an \textit{admissible process}  such that
\begin{align*}
s(\boldsymbol{x}^*(t),\boldsymbol{u}^*(t),t)=   \min_{\boldsymbol{x} \in \mathbb{X}, \boldsymbol{u} \in \mathbb{U} }  s(\boldsymbol{x}(t),\boldsymbol{u}(t),t),  \forall t  \in   [t_0,t_f)
\end{align*}
and
\begin{equation*}
s_f(\boldsymbol{x}^*(t_f)) = \min_{\boldsymbol{x} \in \mathbb{X}_f}  s_f(\boldsymbol{x})
\end{equation*}
then $(\boldsymbol{x}^*(t), \boldsymbol{u}^*(t))$ is an optimal process.
Here, $\mathbb{X}_f$ is the terminal set for admissible $\boldsymbol{x}(t)$ of $ \dot{\boldsymbol{x}}(t) = f(\boldsymbol{x}(t),\boldsymbol{u}(t),t) $ i.e. if $\boldsymbol{x}(t)$ is admissible then $\boldsymbol{x}(t_f) \in \mathbb{X}_f $.
\end{theorem}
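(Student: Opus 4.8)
The plan is to exploit the exact equivalence $J\equiv J_{eq}$ supplied by Theorem \ref{thm1} and reduce the global optimality claim to an elementary, \emph{termwise} lower-bound argument. The governing observation is that $J_{eq}$ has already been decomposed into three pieces — a terminal term $s_f(\boldsymbol{x}(t_f))$, an initial term $q(\boldsymbol{x}(t_0),t_0)$, and a running integral $\int_{t_0}^{t_f}s\,dt$ — none of which is coupled to the others once $q$ is fixed. Since the initial state is prescribed ($\boldsymbol{x}(t_0)=\boldsymbol{x}_0$), the middle term $q(\boldsymbol{x}_0,t_0)$ is a constant shared by every admissible process and therefore plays no role in the comparison.

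First I would fix an arbitrary admissible process $(\boldsymbol{x}(t),\boldsymbol{u}(t))$ and invoke Theorem \ref{thm1} to write $J(\boldsymbol{x},\boldsymbol{u})=J_{eq}(\boldsymbol{x},\boldsymbol{u})$; the same identity applied to $(\boldsymbol{x}^*,\boldsymbol{u}^*)$ — itself admissible by hypothesis — gives $J(\boldsymbol{x}^*,\boldsymbol{u}^*)=J_{eq}(\boldsymbol{x}^*,\boldsymbol{u}^*)$. Next I would bound the two variable terms of $J_{eq}(\boldsymbol{x},\boldsymbol{u})$ from below. The terminal hypothesis $s_f(\boldsymbol{x}^*(t_f))=\min_{\boldsymbol{x}\in\mathbb{X}_f}s_f(\boldsymbol{x})$ gives $s_f(\boldsymbol{x}(t_f))\ge s_f(\boldsymbol{x}^*(t_f))$ because admissibility forces $\boldsymbol{x}(t_f)\in\mathbb{X}_f$. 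The running hypothesis gives, for every fixed $t$, the pointwise inequality $s(\boldsymbol{x}(t),\boldsymbol{u}(t),t)\ge\min_{\boldsymbol{x}\in\mathbb{X},\boldsymbol{u}\in\mathbb{U}}s(\cdot,\cdot,t)=s(\boldsymbol{x}^*(t),\boldsymbol{u}^*(t),t)$, and integrating this over $[t_0,t_f]$ — legitimate since both integrands are continuous along admissible processes — yields $\int_{t_0}^{t_f}s(\boldsymbol{x},\boldsymbol{u},t)\,dt\ge\int_{t_0}^{t_f}s(\boldsymbol{x}^*,\boldsymbol{u}^*,t)\,dt$.

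Assembling these bounds (together with the shared constant $q(\boldsymbol{x}_0,t_0)$) produces the chain
\[
J(\boldsymbol{x},\boldsymbol{u})=J_{eq}(\boldsymbol{x},\boldsymbol{u})\ge s_f(\boldsymbol{x}^*(t_f))+q(\boldsymbol{x}_0,t_0)+\int_{t_0}^{t_f}s(\boldsymbol{x}^*,\boldsymbol{u}^*,t)\,dt=J_{eq}(\boldsymbol{x}^*,\boldsymbol{u}^*)=J(\boldsymbol{x}^*,\boldsymbol{u}^*),
\]
so that $(\boldsymbol{x}^*,\boldsymbol{u}^*)$ attains the infimum over all admissible processes, i.e. it is optimal.

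The step I expect to carry all the weight is recognizing that the pointwise minimizations are performed over the \emph{enlarged} sets $\mathbb{X}\times\mathbb{U}$ — a superset of the state-input pairs actually realizable under the dynamics — so that each minimum is a bona fide lower bound for the value attained by \emph{any} admissible process; this relaxation is precisely the extension principle at work, and it is what decouples the time instants. It is worth flagging that the genuine difficulty is hidden in the hypotheses rather than in the proof: there is no reason an unconstrained, instant-by-instant minimizer of $s$ should also satisfy $\dot{\boldsymbol{x}}=f(\boldsymbol{x},\boldsymbol{u},t)$. The entire burden of the method is therefore shifted onto the \emph{choice of} $q$, which must be engineered so that the pointwise minimizer is simultaneously admissible — exactly the non-convex reconciliation that the remainder of the paper addresses. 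The proof of the theorem as stated, by contrast, is the short verification above.
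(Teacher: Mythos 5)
Your argument is correct and is precisely the classical proof of Krotov's sufficiency theorem: the paper itself does not reproduce a proof but delegates it to \cite[Section 2.3]{Kro95}, and the termwise lower-bound chain you give --- constancy of $q(\boldsymbol{x}_0,t_0)$ across admissible processes, the pointwise bound $s(\boldsymbol{x}(t),\boldsymbol{u}(t),t)\ge s(\boldsymbol{x}^*(t),\boldsymbol{u}^*(t),t)$ integrated over the horizon, the terminal bound via $\boldsymbol{x}(t_f)\in\mathbb{X}_f$, and the identity $J\equiv J_{eq}$ on admissible processes from Theorem \ref{thm1} --- is exactly the argument found there. Your closing observation, that the real difficulty is engineering $q$ so that the unconstrained pointwise minimizer is simultaneously admissible, correctly identifies where the burden of the method lies and matches the role the rest of the paper assigns to the choice of Krotov function.
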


\begin{proof}
{\it Proof} See \cite[Section $2.3$]{Kro95} for the proof.
\end{proof}
\begin{remarks}
\normalfont
Some remarks which follow from Theorem \ref{thm1} and Theorem \ref{thm2} are now briefed.
\begin{enumerate}
\item {The functional $J_{eq}$ is the equivalent functional of the original functional $J$ in \eqref{g_p_i}. Specifically, $\left [ J_{eq}= J \ \forall \left[ \boldsymbol{x}(t), \boldsymbol{u}(t) \right] : \dot{\boldsymbol{x}} (t) =f(\boldsymbol{x}(t),\boldsymbol{u}(t),t) \right]$.}
\item {The function $q(\boldsymbol{x}(t),t)$, known as \textit{Krotov function}, can be \textit{any} continuously differentiable function and each $q(\boldsymbol{x}(t),t)$ leads to different equivalent functional $J_{eq}$. A general way to choose this function is not known and the selection is usually done according to the specifics of the problem in hand. Moreover, it is possible  that the same solution results for different selections of $q (\boldsymbol{x}(t))$ \cite{Kro95}. While this \textit{ad-hocness} in choosing the Krotov function may seem burdensome, it provides enough freedom in making the selection according the specifications of the problem in hand \cite{gurman2016certain,salmin17}.} In essence, this work \textit{exploits} the \textit{ad-hocness} in selecting the Krotov function.
\item The original GOCP is transformed to an equivalent optimization problem of the functions $s$ and $s_f$ over the sets $\mathbb{X}, \mathbb{U}$ and $\mathbb{X}_f$. Thus, the dynamical equation constraint: $ \dot{\boldsymbol{x}} =f(\boldsymbol{x}(t),\boldsymbol{u}(t),t))$ is excluded from the optimization problem by defining $J_{eq}(\boldsymbol{x}(t),\boldsymbol{u}(t),t)$.
\item If the optimization problem formulated in Theorem \ref{thm2} is feasible (i.e. an  admissible process satisfying the sufficient conditions in Theorem \ref{thm2} can be found) then the function $q(\boldsymbol{x}(t),t)$ is called as the \textit{solving function}.
\item Clearly, different different selection of $q(\boldsymbol{x}(t),t)$ result in different optimization problems in Theorem \ref{thm2} and thus the selection of $q(\boldsymbol{x}(t),t)$ is crucial to effectively solve the optimization problems in Theorem \ref{thm2}. Furthermore, the necessary conditions for existence the optima (optimum) of the functions $s$  and $s_f$ coincide with the popular Pontryagin's minimum principle \cite{salmin17}. Moreover, a specific setting of Krotov function $q(\boldsymbol{x}(t),t)$ leads to the popular Hamilton-Jacobi-Bellman equation \cite{Kro95,salmin17}. These two observations lead to the conclusion that Krotov sufficient conditions are in fact the most general sufficient conditions for global results in optimal control theory.
\end{enumerate}
\end{remarks}

The solution to the equivalent optimization problem given in Theorem \ref{thm2} is generally computed using sequential methods because the resulting optimization problem is a non-convex. Such a sequence of processes is called an \textit{optimizing sequence}\cite{Kro88}. One such iterative method is the Krotov method in which an \textit{improving function} is chosen at each iteration. To avoid iterative methods, we convexify the equivalent problems in Theorem \ref{thm2} via a suitable selection of Krotov function.
In the next subsection, the suitable Krotov functions for LQR and LQT problems are proposed. In the following, we use $2J(\boldsymbol{x}(t),\boldsymbol{u}(t),t)$ as the cost functional instead of $J(\boldsymbol{x}(t),\boldsymbol{u}(t),t)$ and the time variable $t$ is dropped wherever it is required for the sake of simplicity.

\subsection{Solution of OCP \ref{prob_lqr} (LQR problem)} \label{sol_lqr}

The equivalent optimization problem for OCP \ref{prob_lqr}, as per Theorem \ref{thm2}, is given as:
\begin{EOCP} \label{eocp_1} Compute an admissible pair $ \left( \boldsymbol{x}^\ast, \boldsymbol{u}^\ast \right)$ which
\begin{enumerate} [(i)]
\item {$\min\limits_{(\boldsymbol{x},\boldsymbol{u}) \in \mathbb{R}^n \times \mathbb{R}^m}  s({\boldsymbol{x}},{\boldsymbol{u}},t), \forall t \in \  [t_0,t_f)$, where
 \begin{equation*}
  s=\frac{\partial q}{\partial t}+\frac{\partial q}{\partial \boldsymbol{x}}[A\boldsymbol{x}+B\boldsymbol{u}]+\boldsymbol{x}^TQ\boldsymbol{x}+\boldsymbol{u}^TR\boldsymbol{u} ;
  \end{equation*} 
}
\item $\min\limits_{\boldsymbol{x }\in \mathbb{R}}  s_f(\boldsymbol{x}) $, where 
\begin{equation*}
  s_f= \boldsymbol{x}^T(t_f)F(t_f)\boldsymbol{x}(t_f) - q(\boldsymbol{x}(t_f),t_f). 
  \end{equation*}
\end{enumerate} 
\end{EOCP}

The next proposition is one of the main results of the paper, where we propose a suitable Krotov function which will be useful in computing a direct solution to OCP $1$.

\begin{proposition}  \label{prop_1}
For the Equivalent OCP \ref{eocp_1}, let the Krotov function be chosen as 
\begin{equation} \label{q_1}
q(\boldsymbol{x},t)= \boldsymbol{x}^TP\boldsymbol{x}
\end{equation}
and every element of the matrix $P$ is differentiable $\forall \ t \in [t_0,t_f)$.
Then, the following statements  are equivalent:
\begin{enumerate} [ (a)]
\item{$s(\boldsymbol{x},\boldsymbol{u},t)$  and $s_f(\boldsymbol{x}(t_f))$ are convex functions in $(\boldsymbol{x}, \boldsymbol{u})$ and $\boldsymbol{x}(t_f)$ respectively;}
\item{$P$ satisfies the matrix inequalities:
\begin{enumerate}[(i)]
    \item {$\dot{P}+PA+A^TP+Q-\frac{1}{2} PBR^{-1}B^{T}P -\frac{1}{4} PBR^{-1}B^{T}P^T -\frac{1}{4}  P^TBR^{-1}B^{T}P  \succeq 0 $ $ \forall  t\in [t_0,t_f)$
    }
    \item{$F(t_f)-P(t_f) \succeq 0$}
\end{enumerate}
}
\end{enumerate}
\end{proposition}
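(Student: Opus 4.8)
The plan is to use the fact that, once the quadratic Krotov function \eqref{q_1} is substituted, both $s$ and $s_f$ become \emph{quadratic forms} in their arguments, so that convexity is decided entirely by the constant Hessian: a quadratic function is convex if and only if the symmetric matrix defining its second-order term is positive semidefinite. First I would compute the ingredients $\frac{\partial q}{\partial t}=\boldsymbol{x}^T\dot{P}\boldsymbol{x}$ and $\frac{\partial q}{\partial \boldsymbol{x}}=\boldsymbol{x}^T(P+P^T)$ and substitute them into the definition of $s$ in Equivalent OCP \ref{eocp_1}, obtaining
\[
s=\boldsymbol{x}^T\dot{P}\boldsymbol{x}+\boldsymbol{x}^T(P+P^T)A\boldsymbol{x}+\boldsymbol{x}^T(P+P^T)B\boldsymbol{u}+\boldsymbol{x}^TQ\boldsymbol{x}+\boldsymbol{u}^TR\boldsymbol{u},
\]
a quadratic form in $\boldsymbol{w}=(\boldsymbol{x},\boldsymbol{u})$. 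For $s_f$ the computation is immediate: $s_f=\boldsymbol{x}^T(t_f)\big(F(t_f)-P(t_f)\big)\boldsymbol{x}(t_f)$.

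Next I would read off the Hessian of $s$ (the symmetrized second-order matrix) as
\[
H=\begin{bmatrix} \dot{P}+\dot{P}^T+(P+P^T)A+A^T(P+P^T)+2Q & (P+P^T)B \\ B^T(P+P^T) & 2R \end{bmatrix},
\]
so that statement (a) for $s$ is equivalent to $H\succeq 0$. Because $R\succ 0$ is assumed, the bottom-right block $2R$ is positive definite, and I would invoke the Schur complement to reduce $H\succeq0$ to the single inequality in $P$ alone,
\[
\dot{P}+\dot{P}^T+(P+P^T)A+A^T(P+P^T)+2Q-\tfrac12(P+P^T)BR^{-1}B^T(P+P^T)\succeq0.
\]
The $s_f$ part is handled the same way: its Hessian is $2F(t_f)-(P(t_f)+P^T(t_f))$, so convexity of $s_f$ is equivalent to $F(t_f)-\tfrac12\big(P(t_f)+P^T(t_f)\big)\succeq0$, which is precisely condition (ii). Since each arrow is an equivalence, chaining them gives (a)$\Leftrightarrow$(b) in both directions at once.

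The hard part will be reconciling the \emph{non-symmetric} form displayed in (i) with the clean Schur-complement inequality above. The key observation is that $q$ depends on $P$ only through its symmetric part, since $\boldsymbol{x}^TP\boldsymbol{x}=\boldsymbol{x}^TP^T\boldsymbol{x}$; consequently the matrix inequality in (i) must be read in the sense $\boldsymbol{v}^T(\cdot)\boldsymbol{v}\ge0$, i.e.\ its symmetric part is positive semidefinite. I would then verify, by a careful but routine expansion, that the symmetric part of the expression in (i) coincides, up to the harmless positive factor $\tfrac12$, with the Schur complement derived above. The only bookkeeping requiring attention is the quadratic term: symmetrizing $-\tfrac12 PBR^{-1}B^TP$ produces $-\tfrac14\big(PBR^{-1}B^TP+P^TBR^{-1}B^TP^T\big)$, while the two remaining terms $-\tfrac14 PBR^{-1}B^TP^T$ and $-\tfrac14 P^TBR^{-1}B^TP$ are already symmetric, and their sum telescopes exactly into $-\tfrac14(P+P^T)BR^{-1}B^T(P+P^T)$; the linear terms collapse analogously into $(P+P^T)A+A^T(P+P^T)$ and $\dot P+\dot P^T$. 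Once this identity is in hand, the positive factor $\tfrac12$ is irrelevant to the semidefiniteness, and the equivalence of (a) and (b) follows.
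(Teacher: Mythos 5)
Your proof is correct, and it reaches the same reduced condition as the paper, but by a formally different route. The paper never forms the joint Hessian: it manipulates the scalar expression for $s$ directly, adds and subtracts the three cross terms appearing in (i), introduces the positive definite square root $\tilde{R}$ of $R$, and rewrites $s$ as a quadratic form in $\boldsymbol{x}$ alone plus the perfect square $\bigl[\tilde{R}\boldsymbol{u}+\tfrac12\tilde{R}^{-1}(B^TP^T+B^TP)\boldsymbol{x}\bigr]^T\bigl[\cdot\bigr]$, concluding that $s$ is convex iff the residual quadratic form is positive semidefinite; $s_f$ is handled as you do. Your version packages the same elimination of $\boldsymbol{u}$ as ``block Hessian $\succeq 0$ iff Schur complement w.r.t.\ the $2R$ block $\succeq 0$,'' which is the matrix-lemma form of the paper's explicit completion of the square, so the two arguments are interchangeable in substance. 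What your route buys is a cleaner treatment of a point the paper leaves implicit: the matrix in (i) is not symmetric, so the inequality must be read as nonnegativity of the associated quadratic form, and your verification that its symmetric part equals one half of the Schur complement $\dot P+\dot P^T+(P+P^T)A+A^T(P+P^T)+2Q-\tfrac12(P+P^T)BR^{-1}B^T(P+P^T)$ makes the equivalence airtight (and incidentally repairs the paper's display \eqref{s1}, where two of the $-\tfrac14$ terms are mistyped with $P$ in place of $P^T$). What the paper's route buys is that the completed square immediately exhibits the minimizer in $\boldsymbol{u}$, which is then reused verbatim in Corollary \ref{cor_1} to read off the optimal control law; from your Schur-complement formulation that step would require one extra line.
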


\begin{proof}  
\normalfont
 With $q$ selected as in \eqref{q_1}, the function $s(\boldsymbol{x},\boldsymbol{u},t)$ becomes: 
\begin{align*}
s= & \boldsymbol{x}^{T} \dot{P} \boldsymbol{x}+ \boldsymbol{x}^T (P+P^{T})  [A\boldsymbol{x}+B\boldsymbol{u}]+\boldsymbol{x}^TQ\boldsymbol{x}+\boldsymbol{u}^TR\boldsymbol{u }\\
=& \boldsymbol{x}^T \dot{P} \boldsymbol{x}+\boldsymbol{x}^TPA\boldsymbol{x}+\boldsymbol{x}^TP^TA\boldsymbol{x}+\boldsymbol{x}^TP^TB\boldsymbol{u}+\boldsymbol{x}^TPB\boldsymbol{u}+\boldsymbol{x}^TQ\boldsymbol{x}+\boldsymbol{u}^TR\boldsymbol{u} 
\end{align*}
Adding and subtracting the term $\boldsymbol{x}^T \left( \frac{1}{2} PBR^{-1}B^{T}P +\frac{1}{4} PBR^{-1}B^{T}P^T + \frac{1}{4}  P^TBR^{-1}B^{T}P \right) \boldsymbol{x}$ , we get
\begin{align}\label{s1}
s= \boldsymbol{x}^T&( \dot{P}+PA+A^TP+Q-\frac{1}{2} PBR^{-1}B^{T}P -\frac{1}{4} PBR^{-1}B^{T}P -\frac{1}{4}  PBR^{-1}B^{T}P)\boldsymbol{x} \nonumber \\
&+\boldsymbol{x}^TPB \boldsymbol{u}+\boldsymbol{x}^TP^TB^T\boldsymbol{x}+\boldsymbol{u}^TR\boldsymbol{u} + \frac{1}{2} \boldsymbol{x}^TPBR^{-1}B^{T}P\boldsymbol{x}  \nonumber\\
& \hspace{0.3 cm}+\frac{1}{4} \boldsymbol{x}^T PBR^{-1}B^{T}P^T \boldsymbol{x} +\frac{1}{4} \boldsymbol{x}^T  P^TBR^{-1}B^{T}P \boldsymbol{x}
\end{align}
Since $R \succ {0}$, there exists a {unique positive definite matrix} \cite{SteLie}, say $\tilde{R}$, such that
$ \tilde{R}^2=R$ and  $\tilde{R}^{-1} \tilde{R}^{-1}=R^{-1} $.  
Now rearranging the terms in \eqref{s1}, we get
\begin{align}
s=&\boldsymbol{x}^T \Big[ \dot{P}+PA+A^TP-\frac{1}{2} PBR^{-1}B^{T}P -\frac{1}{4} PBR^{-1}B^{T}P^T - \frac{1}{4}  P^TBR^{-1}B^{T}P \Big]\boldsymbol{x} \nonumber \\
& \hspace{0.5 cm}+ \left[\tilde{R} \boldsymbol{u}+   \frac{1}{2}\tilde{R}^{-1} \left( B^T P^T +B^TP \right)\boldsymbol{x}\right]^T  \left[ \tilde{R} \boldsymbol{u}+  \frac{1}{2}\tilde{R}^{-1} \left( B^T P^T +B^TP \right)\boldsymbol{x} \right]  \label{seqn}
\end{align}
Clearly the second term in \eqref{seqn} is strictly convex. Now, $s$ is convex iff the following condition is satisfied
\begin{align*} 
 \dot{P}+PA+A^TP-\frac{1}{2} PBR^{-1}B^{T}P -\frac{1}{4} PBR^{-1}B^{T}P^T - \frac{1}{4}  P^TBR^{-1}B^{T}P \ \succeq 0, \forall  t  \in [t_o,t_f)
\end{align*}
Moreover, with $q$ as in \eqref{q_1}, $s_f(x(t_f))$ is given as 
\begin{align*}
s_f&= \boldsymbol{x}^T(t_f)F(t_f)\boldsymbol{x}(t_f)- \boldsymbol{x}^T(t_f) P(t_f)\boldsymbol{x}(t_f) \nonumber \\
&=\boldsymbol{x}^T(t_f) \big[F(t_f) - P(t_f) \big] \boldsymbol{x}(t_f) \label{s_eqn_f_lqr}
\end{align*}
Finally, $s_f$ is convex iff
\begin{equation*} \label{p_con_2}
F(t_f)-P(t_f) \succeq 0.
\end{equation*}
\end{proof}

\begin{corollary} \label{cor_1}
With $q(\boldsymbol{x},t)$ selected as in Proposition \ref{prop_1}, the following statements are true:
\begin{enumerate}
\item The function $q=\boldsymbol{x}^TP\boldsymbol{x}$ is a solving function for OCP \ref{eocp_1}.
\item The global optimal control law for OCP \ref{prob_lqr} is given by:
\begin{equation*} \label{u_eqn}
\boldsymbol{u}^*=- \frac{1}{2} {R}^{-1} B^T\left( P^T +P \right) \boldsymbol{x} 
\end{equation*}
where $P$ is the solution of the matrix differential equation
\begin{equation}  \label{mdre_p_lqr}
\dot{P}+PA+A^TP+Q- \frac{1}{2} PBR^{-1}B^{T}P -\frac{1}{4} PBR^{-1}B^{T}P^T - \frac{1}{4}  P^TBR^{-1}B^{T}P \ =0 
 \end{equation}
 with the final value $P(t_f)=F(t_f) \label{p_eqn_1_f} $.
\end{enumerate}

 \end{corollary}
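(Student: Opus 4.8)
The plan is to build directly on the completed-square representation \eqref{seqn} derived in the proof of Proposition \ref{prop_1} and then to certify optimality via the Krotov sufficient conditions of Theorem \ref{thm2}. First I would impose the \emph{equality} version of the matrix inequality in Proposition \ref{prop_1}, i.e. equation \eqref{mdre_p_lqr}, so that the bracketed quadratic-in-$\boldsymbol{x}$ coefficient in \eqref{seqn} vanishes identically for all $t \in [t_0,t_f)$. This is admissible within the convexity analysis since the equality is the boundary case $0 \succeq 0$ of the stated inequality. With that coefficient gone, $s$ collapses to a single squared term $\big\|\tilde{R}\boldsymbol{u}+\frac{1}{2}\tilde{R}^{-1}(B^TP^T+B^TP)\boldsymbol{x}\big\|^2$, which is nonnegative and attains its global minimum value $0$ precisely on the affine set where the inner expression is zero. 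Solving that expression for $\boldsymbol{u}$, and using $\tilde{R}^{-1}\tilde{R}^{-1}=R^{-1}$, yields the candidate feedback law $\boldsymbol{u}^*=-\frac{1}{2}R^{-1}B^T(P^T+P)\boldsymbol{x}$ stated in the corollary.

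Next I would check that this candidate satisfies every hypothesis of Theorem \ref{thm2}. Substituting $\boldsymbol{u}^*$ into the plant dynamics $\dot{\boldsymbol{x}}=A\boldsymbol{x}+B\boldsymbol{u}$ and integrating forward from the given $\boldsymbol{x}(t_0)=\boldsymbol{x}_0$ produces a well-defined closed-loop trajectory $\boldsymbol{x}^*$, so the pair $(\boldsymbol{x}^*,\boldsymbol{u}^*)$ is admissible; and by construction this pair lies on the minimizing set of $s$ at every $t$, giving $s(\boldsymbol{x}^*,\boldsymbol{u}^*,t)=0=\min_{\boldsymbol{x},\boldsymbol{u}} s$. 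For the terminal term, the proof of Proposition \ref{prop_1} gives $s_f=\boldsymbol{x}^T(t_f)[F(t_f)-P(t_f)]\boldsymbol{x}(t_f)$; choosing the final value $P(t_f)=F(t_f)$ makes $s_f\equiv 0$, so it is trivially minimized at whatever endpoint $\boldsymbol{x}^*(t_f)$ the forward integration delivers. Both conditions of Theorem \ref{thm2} then hold, so the theorem certifies $(\boldsymbol{x}^*,\boldsymbol{u}^*)$ as a globally optimal process, which proves statement $2$. Statement $1$ follows at once: having exhibited an admissible process meeting the sufficient conditions, the equivalent problem OCP \ref{eocp_1} is feasible, which is exactly the definition of $q=\boldsymbol{x}^TP\boldsymbol{x}$ being a solving function.

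The step I expect to demand the most care is the terminal condition, and in particular the sharpening of the inequalities of Proposition \ref{prop_1} to the equalities used here. The inequality $F(t_f)-P(t_f)\succeq 0$ only guarantees convexity of $s_f$, placing its minimizer at $\boldsymbol{x}(t_f)=0$ (or on $\ker(F(t_f)-P(t_f))$); but $\boldsymbol{x}(t_f)$ is free and is fixed by the forward dynamics rather than chosen independently, so the trajectory endpoint will generically not coincide with that minimizer. The resolution is that making $s_f$ identically zero, via $P(t_f)=F(t_f)$, is the only way to guarantee the particular endpoint produced by integration still minimizes $s_f$. The analogous sharpening of the interior inequality to the equality \eqref{mdre_p_lqr} is precisely what turns the convex $s$ into an exact sum of squares whose zero minimum is \emph{attained} along the feedback subspace, rather than merely bounded below. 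I would therefore present these two equality choices explicitly as the mechanism linking the convexity guaranteed by Proposition \ref{prop_1} to the attainment of the minima required by Theorem \ref{thm2}.
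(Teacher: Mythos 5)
Your proposal is correct and follows essentially the same route as the paper: impose the equality case of Proposition \ref{prop_1} so that the quadratic-in-$\boldsymbol{x}$ coefficient in \eqref{seqn} vanishes, read the feedback law off the vanishing of the remaining squared term, and verify the hypotheses of Theorem \ref{thm2} together with admissibility. Your explicit discussion of why the terminal inequality must be sharpened to $P(t_f)=F(t_f)$ (so that the freely-determined endpoint $\boldsymbol{x}^*(t_f)$ still minimizes $s_f$) is a welcome elaboration of a step the paper leaves implicit, but it does not change the argument.
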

 \begin{proof}
 \normalfont
 Clearly, if $P$ satisfies \eqref{mdre_p_lqr}, then $s$ is independent of $\boldsymbol{x}$ and the obtained control law indeed results in an admissible process. Hence, selected $q$ is the solving function. Furthermore, for minimization it required that the second term in \eqref{seqn} is zero which gives the optimal control law as:
 \begin{equation*}
\boldsymbol{u}^*=- \frac{1}{2} {R}^{-1}B^T \left(  P^T +P \right)\boldsymbol{x} 
 \end{equation*}

 \end{proof}
 
 \subsubsection{Solution of OCP\ref{prob_lqr} with final time $t_f \rightarrow \infty$(Infinite Horizon LQR)} \label{inf_lqr}
 Next, the infinite final time LQR problem for a linear time invariant (LTI) is considered. For this case, the terminal  state weighing matrix $F(t_f)=0$.
 The problem statement now becomes:
\begin{problem}
 (Infinite Horizon LQR problem):\\ 
{Compute an optimal control law $\boldsymbol{u}^*(t)$ which minimizes the quadratic performance index/cost functional:
\begin{align*}
J(\boldsymbol{x}(t),\boldsymbol{u}(t),t)=  0.5 \Big[ \int_{t_0}^{ \infty} \boldsymbol{x}^T(t)Q\boldsymbol{x}(t)+\boldsymbol{u}^T(t)R\boldsymbol{u}(t) dt \Big]
\end{align*}
subject to the system dynamics
$ \dot{\boldsymbol{x}}(t)=A\boldsymbol{x}(t)+B\boldsymbol{u}(t)  $
and drives the states of system to zero (Regulation).
Here $ \boldsymbol{x}(t_0)=\boldsymbol{x}_0$ is given, and $\boldsymbol{x}(\infty)$ is free.
Also, $Q \succeq0 \text{ and } R  \succ 0 $.
}
 \end{problem}
 \begin {solution*}
 \normalfont

 For this problem, the matrix differential equation \eqref{mdre_p_lqr} needs to be solved with the boundary condition $P(\infty)=0$. Computing this solution is equivalent to solving the algebraic equation:
 \begin{equation} \label{ae_p_lqr}
 PA+A^TP+Q-\frac{1}{2} PBR^{-1}B^{T}P -\frac{1}{4} PBR^{-1}B^{T}P^T - \frac{1}{4}  P^TBR^{-1}B^{T}P \ =0 
 \end{equation}
 and the resulting optimal control law is  given as:
 \begin{equation} \label{u_lqr_inf_1}
 \boldsymbol{u}^*=- \frac{1}{2} {R}^{-1} B^T\left(  P^T +P \right)\boldsymbol{x} 
\end{equation}

Similar to the finite-final time case it is easy to verify that the function $q=\boldsymbol{x}^TP\boldsymbol{x}$ is a solving function. Note that since the final time $t_f \rightarrow \infty$, it is necessary to ensure the stability of closed loop system. The next lemma provides a proof of the closed-loop stability under another condition on the $P$ matrix.

\begin{lemma} \label{lm_1}
For the system $\dot{\boldsymbol{x}}=A\boldsymbol{x}+B\boldsymbol{u}$, the control input \eqref{u_lqr_inf_1} ensures the stability of closed loop if $P$ satisfies \eqref{ae_p_lqr} with $(P+P^T) \succ 0$.
\end{lemma}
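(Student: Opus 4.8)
The plan is to establish closed-loop stability through a Lyapunov argument that exploits the hypothesis $(P+P^T)\succ 0$. The natural candidate is the Krotov function itself, $V(\boldsymbol{x}) = \boldsymbol{x}^T P \boldsymbol{x}$. Although $P$ is not assumed symmetric, a scalar quadratic form only sees the symmetric part, so $V(\boldsymbol{x}) = \tfrac{1}{2}\boldsymbol{x}^T(P+P^T)\boldsymbol{x}$; the assumption $(P+P^T)\succ 0$ therefore makes $V$ positive definite and radially unbounded, i.e. a legitimate Lyapunov candidate. Substituting the control law \eqref{u_lqr_inf_1} into the dynamics gives the closed-loop system $\dot{\boldsymbol{x}} = A_{cl}\boldsymbol{x}$ with $A_{cl} = A - \tfrac{1}{2}BR^{-1}B^T(P+P^T)$.

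The core of the argument is to compute $\dot V = \boldsymbol{x}^T\big(A_{cl}^T P + P A_{cl}\big)\boldsymbol{x}$ and show that the bracketed matrix is negative semidefinite. Expanding $A_{cl}^T P + P A_{cl}$ while carefully tracking the asymmetry of $P$ produces $A^TP + PA$ together with several $BR^{-1}B^T$-weighted quadratic terms in $P$ and $P^T$. I would then use the algebraic equation \eqref{ae_p_lqr} to eliminate $PA + A^T P$, replacing it by $-Q$ plus its own $BR^{-1}B^T$-terms. The decisive step is that, after collecting coefficients, the control-type terms combine into a single perfect square, so that
\[
A_{cl}^T P + P A_{cl} = -Q - \tfrac{1}{4}(P+P^T)BR^{-1}B^T(P+P^T).
\]
Hence $\dot V = -\boldsymbol{x}^T Q\boldsymbol{x} - \tfrac{1}{4}\boldsymbol{x}^T(P+P^T)BR^{-1}B^T(P+P^T)\boldsymbol{x} \le 0$, since $Q\succeq 0$ and $R^{-1}\succ 0$.

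With $V$ positive definite and $\dot V \le 0$, Lyapunov's theorem gives stability of the closed loop; if $Q\succ 0$, then $\dot V$ is negative definite and asymptotic stability is immediate. For the general case $Q\succeq 0$, I would invoke LaSalle's invariance principle, examining the largest invariant set contained in $\{\boldsymbol{x} : \boldsymbol{x}^TQ\boldsymbol{x}=0,\ B^T(P+P^T)\boldsymbol{x}=0\}$ and arguing that it reduces to the origin under the usual detectability condition associated with $Q$.

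I expect the main obstacle to be bookkeeping rather than conceptual. Because $P$ is not symmetric, the cross terms $PBR^{-1}B^TP$, $PBR^{-1}B^TP^T$, and $P^TBR^{-1}B^TP$ must be tracked individually, and one must verify that their coefficients are exactly those needed to complete the square $-\tfrac14(P+P^T)BR^{-1}B^T(P+P^T)$; the interplay between $V=\boldsymbol{x}^TP\boldsymbol{x}$ and its symmetrized form $\tfrac12\boldsymbol{x}^T(P+P^T)\boldsymbol{x}$ is what ties this together. The secondary subtlety is upgrading the resulting negative-\emph{semi}definiteness of $\dot V$ to genuine asymptotic stability, which is precisely where the detectability/LaSalle argument is required.
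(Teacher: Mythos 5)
Your proof is correct and takes essentially the same route as the paper: the same Lyapunov candidate (your $V=\boldsymbol{x}^TP\boldsymbol{x}=\tfrac12\boldsymbol{x}^T(P+P^T)\boldsymbol{x}$ versus the paper's $\boldsymbol{x}^T(P+P^T)\boldsymbol{x}$), the same substitution of the algebraic equation \eqref{ae_p_lqr} into $\dot V$, and the same sign conclusion --- your completed square $-\tfrac14(P+P^T)BR^{-1}B^T(P+P^T)$ is exactly the paper's collection of cross terms read as a quadratic form. The only (welcome) addition is your LaSalle/detectability remark for upgrading to asymptotic stability when $Q\succeq 0$ only; the paper stops at plain stability in that case.
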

\begin{proof}
\normalfont
Let the Lyapunov function be $V(\boldsymbol{x}(t))= \boldsymbol{x}^T(P+P^T)\boldsymbol{x}$.
Then :
\begin{align*}
\dot{V}&= \boldsymbol{x}^T (P+P^T) \dot{\boldsymbol{x}} +\dot{\boldsymbol{x}}^T (P+P^T) \boldsymbol{x}  \\
&= \boldsymbol{x}^T \left(  A^TP+A^TP^T+A^TP+A^TP^T \right)\boldsymbol{x} - 2\boldsymbol{x}^TPBR^{-1} B^TP\boldsymbol{x} -\boldsymbol{x}^TP^TBR^{-1} B^T P\boldsymbol{x}\\
& \hspace{0.5 cm} -\boldsymbol{x}^TPBR^{-1} B^T P^T \boldsymbol{x}  \\
&= 2\boldsymbol{x}^T\left[ -Q - \frac{1}{4} PBR^{-1}B^TP^T -\frac{1}{4} P^TBR^{-1}B^TP -\frac{1}{2} PBR^{-1}B^TP  \right]\boldsymbol{x}
\end{align*}
It is easy to verify the quantity $\left[ -Q - \frac{1}{4} PBR^{-1}B^TP^T -\frac{1}{4} P^TBR^{-1}B^TP -\frac{1}{2} PBR^{-1}B^TP \right] $ is negative semi-definite if $Q \succeq 0$ and negative definite if $Q \succ 0$. Thus, using Lyapunov theory \cite{khalil1996nonlinear}, the closed loop system is stable if $Q \succeq0$ and asymptotically stable if $Q \succ 0$.
\end{proof}
 \end{solution*}
 \begin{remark1}
 \normalfont
The matrix differential equation \eqref{mdre_p_lqr} reduces to the popular matrix differential riccati equation (MDRE) for a symmetric $P$ matrix.  Also, for the infinite final-time case, the algebraic equation \eqref{ae_p_lqr} admits more number of solutions than MDRE as demonstrated in Example $3$ in Section \ref{ne}. A more rigorous analysis and application of these solutions is the subject of future research.
 \end{remark1}
\subsection{ Solution of OCP \ref{prob_lqt} (LQT problem)}
The equivalent optimization problem for OCP \ref{prob_lqt} is given as:
\begin{EOCP} \label{eocp_2}  Compute an optimal control law $\boldsymbol{u}^*(t)$ which 
\begin{enumerate} [(i)]
\item { $\min\limits_{(\boldsymbol{x},\boldsymbol{u}) \in \mathbb{R}^n \times \mathbb{R}^m} s(\boldsymbol{x},\boldsymbol{u},t)$, $\forall t \in [t_0,t_f)$,  where
 \begin{equation*} 
 s=\frac{\partial q}{\partial t}+\frac{\partial q}{\partial \boldsymbol{x}}[A\boldsymbol{x}+B\boldsymbol{u}]+\boldsymbol{e}^TQ\boldsymbol{e}+\boldsymbol{u}^TR\boldsymbol{u}
 \end{equation*}
 }
\item{$\min\limits_{x \in \mathbb{R}} s_f(\boldsymbol{x}(t_f))$, where 
 \begin{equation*} 
 s_f= \boldsymbol{e}^T(t_f)F(t_f)\boldsymbol{e}(t_f) - q(\boldsymbol{x}(t_f),t_f)
  \end{equation*}
  }
\end{enumerate} 
\end{EOCP}

Another main result of the paper is given in the next proposition, which will be useful in computing the direct solution to OCP \ref{prob_lqt}.
\begin{proposition}  \label{prop_2}
For the Equivalent OCP \ref{eocp_2}, let the  Krotov function be chosen as 
\begin{equation} \label{q_lqt_cov}
q \triangleq \boldsymbol{x}^T P\boldsymbol{x} - 2 \boldsymbol{g}^{T}\boldsymbol{x} 
\end{equation}
and every element of the matrix $P$ and the vector $\boldsymbol{g}$ is differentiable $\forall \ t \in [t_0,t_f)$. Then, the following statements are equivalent:
\begin{enumerate}[(a)]
\item{ $s(\boldsymbol{x},\boldsymbol{u},t)$  and $s_f(\boldsymbol{x}(t_f))$ are \color{black}{bounded below and convex } in $(\boldsymbol{x}, \boldsymbol{u})$ and $\boldsymbol{x}(t_f)$ respectively;}
\item{ 
\begin{enumerate}[i)]
\item{ \label{itm:Prop2P} $P$ satisfies the following matrix inequalities:

\begin{enumerate}[1)]
\item { 
$
\dot{P}+PA+A^TP -\frac{1}{2} PBR^{-1}B^{T}P -\frac{1}{4} PBR^{-1}B^{T}P^T -  \frac{1}{4} P^TBR^{-1}B^{T}P+C^{T}QC \succeq 0 \\ \forall t \in [t_0,t_f)
$
}
\item {$C^T(t_f)F(t_f)C(t_f) -P(t_f) \succeq 0 $}
\end{enumerate}
}
\item{$\boldsymbol{g}$ satisfies the vector differential equation
\begin{equation*} \label{g_eqn}
\dot{\boldsymbol{g}}+A^T\boldsymbol{g}+C^TQ\boldsymbol{z} -\frac{1}{2} \left( P+P^T \right) BR^{-1}B^T\boldsymbol{g}= \boldsymbol{0},
\end{equation*}
with the boundary condition
$ \boldsymbol{g}(t_f)= C^T(t_f) F(t_f) \boldsymbol{z}(t_f). $
}
\end{enumerate}
}
\end{enumerate}
\end{proposition}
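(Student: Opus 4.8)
The plan is to mirror the completion-of-square argument used for Proposition \ref{prop_1}, but now carrying along the linear-in-$\boldsymbol{x}$ terms that the affine part $-2\boldsymbol{g}^T\boldsymbol{x}$ of the Krotov function and the reference $\boldsymbol{z}$ introduce. First I would substitute $q=\boldsymbol{x}^TP\boldsymbol{x}-2\boldsymbol{g}^T\boldsymbol{x}$ into the definition of $s$, using $\partial q/\partial t=\boldsymbol{x}^T\dot P\boldsymbol{x}-2\dot{\boldsymbol{g}}^T\boldsymbol{x}$ and $\partial q/\partial\boldsymbol{x}=\boldsymbol{x}^T(P+P^T)-2\boldsymbol{g}^T$, and expand $\boldsymbol{e}^TQ\boldsymbol{e}=(C\boldsymbol{x}-\boldsymbol{z})^TQ(C\boldsymbol{x}-\boldsymbol{z})$. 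Collecting terms, $s$ becomes a quadratic function of $(\boldsymbol{x},\boldsymbol{u})$ whose purely quadratic part is exactly that of Proposition \ref{prop_1} with $Q$ replaced by $C^TQC$, together with an affine part $-2(\dot{\boldsymbol{g}}+A^T\boldsymbol{g}+C^TQ\boldsymbol{z})^T\boldsymbol{x}-2\boldsymbol{g}^TB\boldsymbol{u}$ and a constant $\boldsymbol{z}^TQ\boldsymbol{z}$.

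Next, exactly as in Proposition \ref{prop_1}, I would introduce the unique $\tilde R\succ0$ with $\tilde R^2=R$ and complete the square in $\boldsymbol{u}$, writing $\boldsymbol{w}\triangleq\tilde R\boldsymbol{u}+\tfrac12\tilde R^{-1}B^T\big[(P+P^T)\boldsymbol{x}-2\boldsymbol{g}\big]$. Since $\boldsymbol{u}\mapsto\boldsymbol{w}$ (at fixed $\boldsymbol{x}$) is an invertible affine map, $(\boldsymbol{x},\boldsymbol{u})\mapsto(\boldsymbol{x},\boldsymbol{w})$ is an invertible affine change of variables under which $s=\boldsymbol{w}^T\boldsymbol{w}+g(\boldsymbol{x})$, with $g$ quadratic in $\boldsymbol{x}$ and no $\boldsymbol{w}$-coupling. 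Because $\boldsymbol{w}^T\boldsymbol{w}$ is convex, non-negative, and decoupled, convexity of $s$ is equivalent to convexity of $g$, and boundedness below of $s$ is equivalent to boundedness below of $g$ (the infimum over $\boldsymbol{u}$ being attained at $\boldsymbol{w}=0$). Writing $g(\boldsymbol{x})=\boldsymbol{x}^TS\boldsymbol{x}+\boldsymbol{d}^T\boldsymbol{x}+e$, a short computation—using the identity $\boldsymbol{x}^TPBR^{-1}B^TP\boldsymbol{x}=\boldsymbol{x}^TP^TBR^{-1}B^TP^T\boldsymbol{x}$ to rewrite $\tfrac14(P+P^T)BR^{-1}B^T(P+P^T)$ as $\tfrac12PBR^{-1}B^TP+\tfrac14PBR^{-1}B^TP^T+\tfrac14P^TBR^{-1}B^TP$—shows that $S$ is precisely the matrix in part (b)(i), so $g$ (hence $s$) is convex iff that matrix is $\succeq0$. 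The identical analysis of $s_f=(C\boldsymbol{x}(t_f)-\boldsymbol{z}(t_f))^TF(t_f)(C\boldsymbol{x}(t_f)-\boldsymbol{z}(t_f))-\boldsymbol{x}^T(t_f)P(t_f)\boldsymbol{x}(t_f)+2\boldsymbol{g}^T(t_f)\boldsymbol{x}(t_f)$ gives convexity iff $C^T(t_f)F(t_f)C(t_f)-P(t_f)\succeq0$.

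It then remains to identify the boundedness requirement with the $\boldsymbol{g}$ equation. Collecting the linear coefficient of $g$, the contribution $-2(\dot{\boldsymbol{g}}+A^T\boldsymbol{g}+C^TQ\boldsymbol{z})$ from the affine part of $s$ and the cross term $+(P+P^T)BR^{-1}B^T\boldsymbol{g}$ produced by completing the square combine into $\boldsymbol{d}=-2\big[\dot{\boldsymbol{g}}+A^T\boldsymbol{g}+C^TQ\boldsymbol{z}-\tfrac12(P+P^T)BR^{-1}B^T\boldsymbol{g}\big]$. Requiring the convex quadratic $g$ to be bounded below forces $\boldsymbol{d}=\boldsymbol{0}$, which is exactly the stated vector differential equation for $\boldsymbol{g}$. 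Applying the same reasoning to $s_f$, boundedness forces its linear coefficient $-2C^T(t_f)F(t_f)\boldsymbol{z}(t_f)+2\boldsymbol{g}(t_f)$ to vanish, yielding the terminal condition $\boldsymbol{g}(t_f)=C^T(t_f)F(t_f)\boldsymbol{z}(t_f)$.

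The main obstacle is this last equivalence between \emph{bounded below} and \emph{vanishing linear coefficient}. The clean direction is (b)$\Rightarrow$(a): if $S\succeq0$ and $\boldsymbol{d}=\boldsymbol{0}$, then $g(\boldsymbol{x})=\boldsymbol{x}^TS\boldsymbol{x}+e$ is manifestly convex and bounded below, so $s$ is too. The converse is delicate, since a convex quadratic with only positive-\emph{semidefinite} $S$ is already bounded below once $\boldsymbol{d}$ lies in the range of $S$, a condition strictly weaker than $\boldsymbol{d}=\boldsymbol{0}$. The equivalence as written is therefore exact in the regime in which $q$ is actually a solving function, where the $P$-inequality holds with equality so that $S=\boldsymbol{0}$ and boundedness genuinely forces $\boldsymbol{d}=\boldsymbol{0}$; I would make this regime explicit (or read condition (a) as boundedness in the configuration that renders $q$ a solving function), after which the boundedness requirement and the $\boldsymbol{g}$ equation coincide and the stated equivalence follows in both directions.
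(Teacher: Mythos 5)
Your proposal follows essentially the same route as the paper's proof: substitute the affine--quadratic Krotov function, complete the square in $\boldsymbol{u}$ via $\tilde R$ with $\tilde R^2=R$, and read off the positive-semidefiniteness condition from the quadratic part, the $\boldsymbol{g}$-equation from the linear part, and the terminal conditions from the identical treatment of $s_f$. Your closing caveat---that when the quadratic part is only positive \emph{semi}definite, boundedness below merely forces the linear coefficient into the range of that matrix rather than to zero---is a genuine subtlety that the paper's own proof silently elides (it treats the linear term in isolation), and your suggested repair, restricting to the solving-function regime where the matrix inequality holds with equality, is a reasonable way to make the stated equivalence exact.
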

\begin{proof}
\normalfont{
 With $q$ chosen as in \eqref{q_lqt_cov}, the function $s(x,u,t)$ is given as
\begin{align*} 
 s&=\boldsymbol{x}^T \dot{P} \boldsymbol{x}-2\dot{\boldsymbol{g}}^{T}x+(x^T (P+P^T)-2\boldsymbol{g}^{T})[A\boldsymbol{x}+B\boldsymbol{u}] +\boldsymbol{e}^TQ\boldsymbol{e}+\boldsymbol{u}^TR\boldsymbol{u  }\\
 &=\boldsymbol{x}^T( \dot{P}+PA+P^TA +C^{T}QC)\boldsymbol{x}+\boldsymbol{x}^TPB\boldsymbol{u} +\boldsymbol{x}^TP^TB\boldsymbol{u}-2 \dot{\boldsymbol{g}}^{T}\boldsymbol{x}-2\boldsymbol{g}^{T}A\boldsymbol{x}-2\boldsymbol{g}^{T}B\boldsymbol{u}\\
 &\hspace{0.7 cm}+\boldsymbol{z}^TQ\boldsymbol{z}-2\boldsymbol{x}^{T}C^{T}Q\boldsymbol{z}+\boldsymbol{u}^TR\boldsymbol{u}
 \end{align*}
Adding and subtracting the terms $\boldsymbol{x}^T \left( \frac{1}{2} PBR^{-1}B^{T}P +\frac{1}{4} PBR^{-1}B^{T}P^T + \frac{1}{4}  P^TBR^{-1}B^{T}P \right)\boldsymbol{ }x$,\\ $\boldsymbol{g}^TBR^{-1}B^T\boldsymbol{g}$ and $\boldsymbol{x}^T (P+P^T)BR^{-1}B^{T}\boldsymbol{g}$ , we get
 \begin{multline}
 s= \boldsymbol{x}^T \big[ \dot{P}+PA+A^TP -\frac{1}{2} PBR^{-1}B^{T}P+C^{T}QC -\frac{1}{4} PBR^{-1}B^{T}P^T -\frac{1}{4} P^TBR^{-1}B^{T}P \big]\boldsymbol{x}\\
-2\boldsymbol{x}^{T} \big[\dot{\boldsymbol{g}} +A^T\boldsymbol{g}+C^{T}Q\boldsymbol{z}-\frac{1}{2} \big( P+P^T \big)  BR^{-1}B^T\boldsymbol{g} \big]+\frac{1}{2}\boldsymbol{u}^TB^TP\boldsymbol{x}-\boldsymbol{u}^TB^T\boldsymbol{g} +\frac{1}{2}\boldsymbol{u}^TB^TP^T\boldsymbol{x} \\
 - \frac{1}{2} \boldsymbol{x}^TPBR^{-1}B^T\boldsymbol{g} -\frac{1}{2} \boldsymbol{x}^TP^TR^{-1}B^T\boldsymbol{g}+\frac{1}{2} \boldsymbol{x}^TPB\boldsymbol{u}+ \frac{1}{4}\boldsymbol{x}^TPBR^{-1}B^TP\boldsymbol{x }+\frac{1}{2} \boldsymbol{x}^TP^TB\boldsymbol{u }\\
+\frac{1}{4}\boldsymbol{x}^TP^TBR^{-1}B^TP\boldsymbol{x }+ \frac{1}{4} \boldsymbol{x}^T P^TBR^{-1}B^TP\boldsymbol{x}+\frac{1}{4}\boldsymbol{x}^TP^TBR^{-1}B^TP^T\boldsymbol{x} \\
 - \frac{1}{2} \boldsymbol{g}BR^{-1}B^TP\boldsymbol{x}- \frac{1}{2}\boldsymbol{g}BR^{-1}B^TP^T\boldsymbol{x}+ \boldsymbol{g}^TBR^{-1} B^T\boldsymbol{g}\label{s_eqn_lqt_1}
\end{multline}
Since $R \succ 0$, there exists a {unique positive definite matrix} \cite{SteLie}, say $\tilde{R}$, such that
$ \tilde{R}^2=R \ \text{and} \ \tilde{R}^{-1} \tilde{R}^{-1}=R^{-1}  $.
Now rearranging the terms in \eqref{s_eqn_lqt_1}, we get:
\begin{multline} \label{s_eqn_lqt_2}	
 s=\boldsymbol{x}^T\left[ \dot{P}+PA+A^TP -\frac{1}{2} PBR^{-1}B^{T}P -\frac{1}{4} PBR^{-1}B^{T}P^T -  \frac{1}{4} P^TBR^{-1}B^{T}P+C^{T}QC \right] \boldsymbol{x}\\
 	+ \left[\tilde{R} \boldsymbol{u} + \frac{1}{2}\tilde{R}^{-1} B^{T}(P+P^T)\boldsymbol{x} -\tilde{R}^{-1}B^{T}\boldsymbol{g}\right]^{T} \left[\tilde{R}\boldsymbol{u}+\frac{1}{2}\tilde{R}^{-1}B^{T}(P+P^T)\boldsymbol{x} -\tilde{R}^{-1}B^{T}\boldsymbol{g}\right]\\
 -2\boldsymbol{x}^{T} \left[\dot{\boldsymbol{g}}+A^T\boldsymbol{g}+C^{T}Q\boldsymbol{z}-\frac{1}{2} \left( P+P^T \right) BR^{-1}B^T\boldsymbol{g} \right] 
 + \left[\boldsymbol{z}^TQ\boldsymbol{z}-\boldsymbol{g}^{T}BR^{-1}B^{T}\boldsymbol{g} \right]  
\end{multline}

The second term in \eqref{s_eqn_lqt_2} is positive definite and strictly convex. The third term is linear in $\boldsymbol{x}$, and the fourth term is independent of $\boldsymbol{x}$ and $\boldsymbol{u}$. The first term in \eqref{s_eqn_lqt_2} is quadratic in $\boldsymbol{x}$ and it is convex iff it is positive semi-definite. Hence, the function $s(\boldsymbol{x},\boldsymbol{u},t)$ is convex iff
\begin{equation*} 
\dot{P}+PA+A^TP -\frac{1}{2} PBR^{-1}B^{T}P -\frac{1}{4} PBR^{-1}B^{T}P^T - \frac{1}{4} P^TBR^{-1}B^{T}P+C^{T}QC \succeq 0 \ \forall t \in [t_0,t_f).
\end{equation*}
Since the second term in \eqref{s_eqn_lqt_2} is linear in $\boldsymbol{x}$, it is bounded below if
\begin{equation} \label{g_eqn_lqt}
\dot{\boldsymbol{g}}+A^T\boldsymbol{g}+C^TQ\boldsymbol{z} - \frac{1}{2}(P+P^T) BR^{-1}B^T\boldsymbol{g}=0. 
\end{equation}
Finally, the term independent of $\boldsymbol{x}$ and $\boldsymbol{u}$, i.e., $\boldsymbol{z}^TQ\boldsymbol{z} -\boldsymbol{g}^{T}BR^{-1}B^{T}\boldsymbol{g}$ is bounded because $\boldsymbol{z}(t)$ and $\boldsymbol{g}(t)$ are bounded \cite{anderson2007optimal}. Next, with $q(\boldsymbol{x}(t),t)$ as in \eqref{q_lqt_cov}, $s_f(\boldsymbol{x}(t_f))$ is given as
\begin{align*}
s_f = &\boldsymbol{x}^T(t_f) \big[C^T(t_f)F(t_f) C(t_f)-P(t_f)\big]\boldsymbol{x}(t_f) +\big[2 \boldsymbol{g}^{T}(t_f)- 2 \boldsymbol{z}^T (t_f) F(t_f) C(t_f)\big] \boldsymbol{x}(t_f)\\
& \hspace{0.5 cm}+\boldsymbol{z}^T(t_f) F(t_f) \boldsymbol{z}(t_f)
\end{align*}
Similar to the case of $s$, $s_f$ is convex iff
\begin{equation*} \label{p_con_f_lqt}
C^T(t_f) F(t_f)C(t_f)- P(t_f) \succeq 0
\end{equation*}
Finally, $s_f$ is bounded below if 
\begin{equation*}
\boldsymbol{g}(t_f)= C^T(t_f) F(t_f) \boldsymbol{z}(t_f)
\end{equation*}
}
\end{proof}
The following corollary computes the optimal control law for OCP \ref{prob_lqt}.
\begin{corollary} \label{cor_2}
With $q(\boldsymbol{x},t)$ selected as in Proposition \ref{prop_1}, the following statements are true:
\begin{enumerate}
\item The function $q=\boldsymbol{x}^TP\boldsymbol{x}-2g^T\boldsymbol{x}$ is a solving function for OCP \ref{prob_lqt}.
\item The global optimal control law for OCP \ref{prob_lqt} is given by:
\begin{equation*} \label{u_eqn}
\boldsymbol{u}^*=- \frac{1}{2} {R}^{-1} B^T\left( P^T +P \right)\boldsymbol{x} +R^{-1}B^T\boldsymbol{g}
\end{equation*}
where $P$ is the solution of the matrix differential equation
\begin{equation} \label{mdre_p_lqt}
\dot{P}+PA+A^TP -\frac{1}{2} PBR^{-1}B^{T}P -\frac{1}{4} PBR^{-1}B^{T}P^T - \frac{1}{4} P^TBR^{-1}B^{T}P+C^{T}QC=0
\end{equation}
with the final value $P(t_f)=C^T(t_f) F(t_f)C(t_f) \label{p_eqn_1_f} $ and $\boldsymbol{g}(t)$ satisfies the differential equation $\dot{\boldsymbol{g}}+A^T\boldsymbol{g}+C^TQ\boldsymbol{z }- \frac{1}{2}(P+P^T) BR^{-1}B^T\boldsymbol{g}=0$ with final value $\boldsymbol{g}(t_f)= C^T(t_f) F(t_f) \boldsymbol{z}(t_f)$.
\end{enumerate}
\end{corollary}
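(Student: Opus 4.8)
The plan is to mirror the argument used for Corollary \ref{cor_1} in the LQR case, now exploiting the full decomposition \eqref{s_eqn_lqt_2} of $s$ established in the proof of Proposition \ref{prop_2}. Recall that \eqref{s_eqn_lqt_2} splits $s$ into four pieces: a quadratic-in-$\boldsymbol{x}$ term carrying the matrix bracket, a strictly convex perfect-square term in $\boldsymbol{u}$, a linear-in-$\boldsymbol{x}$ term carrying the $\boldsymbol{g}$-bracket, and a term independent of both $\boldsymbol{x}$ and $\boldsymbol{u}$. Proposition \ref{prop_2} only guarantees convexity and boundedness below via the matrix inequality and the vector equation; to actually attain the minimum I would strengthen the matrix inequality to the equality \eqref{mdre_p_lqt} and retain the vector differential equation \eqref{g_eqn_lqt} for $\boldsymbol{g}$.

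First I would choose $P$ to satisfy \eqref{mdre_p_lqt} with equality, which forces the quadratic-in-$\boldsymbol{x}$ term of \eqref{s_eqn_lqt_2} to vanish identically, and require $\boldsymbol{g}$ to satisfy \eqref{g_eqn_lqt}, which annihilates the linear-in-$\boldsymbol{x}$ term. After these two choices, $s$ collapses to the strictly convex square in $\boldsymbol{u}$ plus the constant $\boldsymbol{z}^TQ\boldsymbol{z}-\boldsymbol{g}^TBR^{-1}B^T\boldsymbol{g}$, which depends on neither $\boldsymbol{x}$ nor $\boldsymbol{u}$. Since the only $\boldsymbol{u}$-dependence is through the nonnegative square $\left[\tilde{R}\boldsymbol{u}+\tfrac{1}{2}\tilde{R}^{-1}B^T(P+P^T)\boldsymbol{x}-\tilde{R}^{-1}B^T\boldsymbol{g}\right]^T\left[\,\cdots\,\right]$, the global minimum of $s$ is attained exactly when this inner bracket is set to zero. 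Solving that linear equation for $\boldsymbol{u}$ and using $\tilde{R}^{-1}\tilde{R}^{-1}=R^{-1}$ yields the claimed law $\boldsymbol{u}^*=-\tfrac{1}{2}R^{-1}B^T(P^T+P)\boldsymbol{x}+R^{-1}B^T\boldsymbol{g}$.

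For the terminal cost, I would invoke the final conditions: setting $P(t_f)=C^T(t_f)F(t_f)C(t_f)$ zeroes the quadratic term of $s_f$ and $\boldsymbol{g}(t_f)=C^T(t_f)F(t_f)\boldsymbol{z}(t_f)$ zeroes its linear term, so $s_f$ is minimized over admissible terminal states. By Theorem \ref{thm2} these pointwise minimizations of $s$ on $[t_0,t_f)$ and of $s_f$ at $t_f$ certify global optimality; and since the feedback law produces a well-defined closed-loop trajectory $\boldsymbol{x}^*$ satisfying the dynamics, the pair $(\boldsymbol{x}^*,\boldsymbol{u}^*)$ is admissible, so $q$ is a solving function.

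The main obstacle I anticipate lies not in the algebra but in justifying the passage from the convexity inequalities of Proposition \ref{prop_2} to the equalities imposed here, and in confirming that nullifying the quadratic and linear terms is compatible with the boundedness-below requirement, i.e. that the leftover constant $\boldsymbol{z}^TQ\boldsymbol{z}-\boldsymbol{g}^TBR^{-1}B^T\boldsymbol{g}$ stays finite, which follows from boundedness of $\boldsymbol{z}$ and $\boldsymbol{g}$ as noted in the proof of Proposition \ref{prop_2}. A secondary point requiring care is verifying that the feedback indeed yields an admissible process, which amounts to existence of a solution to the resulting linear time-varying closed-loop ODE on $[t_0,t_f]$.
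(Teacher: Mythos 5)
Your proposal is correct and follows essentially the same route as the paper's proof: impose \eqref{mdre_p_lqt} and \eqref{g_eqn_lqt} as equalities so that the quadratic and linear parts of the decomposition \eqref{s_eqn_lqt_2} vanish, leaving only the perfect-square term in $\boldsymbol{u}$, whose annihilation yields the stated feedback law, with the terminal conditions handling $s_f$ and admissibility giving the solving-function claim. The paper's own proof is just a terser version of the same argument; your added remarks on boundedness of the leftover constant and existence of the closed-loop trajectory are consistent with what the paper already notes in the proof of Proposition \ref{prop_2}.
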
 

\begin{proof}
\normalfont{ Clearly, if $P$ satisfies \eqref{mdre_p_lqt} and $\boldsymbol{g}$ satisfies \eqref{g_eqn_lqt}, then $s$ is independent of $\boldsymbol{x}$ and the obtained control law indeed yields an admissible process. Hence, $q(\boldsymbol{x}(t),t)$ is the solving function. The third term in \eqref{s_eqn_lqt_2} is strictly convex and attains a minimum value when
\begin{align*}
\tilde{R} \boldsymbol{u }+ \frac{1}{2}\tilde{R}^{-1} B^{T}(P+P^T)\boldsymbol{x }-\tilde{R}^{-1}B^{T}\boldsymbol{g}= \boldsymbol{0} \\
\implies \boldsymbol{u }=- \frac{1}{2} {R}^{-1} B^T\left( P^T +P \right)\boldsymbol{x }+R^{-1}B^T\boldsymbol{g}
\end{align*}
}
\end{proof}

\subsubsection{Solution of OCP\ref{prob_lqt} with final time $t_f \rightarrow \infty$(Infinite Horizon LQT)}
Next, the infinite final time LQT problem for a linear time invariant (LTI) is considered. For this case, the terminal cost $F(t_f)=0$.
The problem statement now reads:
\begin{problem}
(Infinite Horizon LQT problem):\\ 
{Compute an optimal control law $\boldsymbol{u}^*(t)$ which minimizes the quadratic performance index/cost functional:
\begin{align*}
J(\boldsymbol{e}(t),\boldsymbol{u}(t),t)= 0.5 \Big[ \int_{t_0}^{ \infty} \boldsymbol{e}^T(t)Q\boldsymbol{e}(t)+\boldsymbol{u}^T(t)R\boldsymbol{u}(t) dt \Big]
\end{align*}
subject to the system dynamics
$ \dot{\boldsymbol{x}}(t)=A\boldsymbol{x}(t)+B\boldsymbol{u}(t) $; $\boldsymbol{y}(t)=C\boldsymbol{x}(t)$
and drives the states of system to a desired trajectory $\boldsymbol{z}(t)$. 
Here
$\boldsymbol{e}(t) \triangleq \boldsymbol{y}(t)-\boldsymbol{z}(t)$ is the error vector, $\boldsymbol{x}(t_0)=\boldsymbol{x}_0$ is given, $\boldsymbol{x}(t_f)$ is free and $t_f$ is fixed. $ \boldsymbol{x}(t_0)=\boldsymbol{x}_0$ is given, and $\boldsymbol{x}(\infty)$ is free.
Also, $Q \succeq0 \text{ and } R \succ0. $
}
\end{problem}
\begin {solution*}
\normalfont

For this problem, similar to the case of regulation, the matrix differential equation \eqref{mdre_p_lqt} needs to be solved with the boundary condition $P(\infty)=0$. Similarly, the differential equation \eqref{g_eqn_lqt} needs to be solved with boundary condition $\boldsymbol{g}(\infty)=\boldsymbol{0}$. Computing these solutions is equivalent to solving :
\begin{equation*} 
PA+A^TP+Q-\frac{1}{2} PBR^{-1}B^{T}P -\frac{1}{4} PBR^{-1}B^{T}P^T - \frac{1}{4} P^TBR^{-1}B^{T}P \ =0 
\end{equation*}
and
\begin{equation} \label{g_lqt_eqn}
\boldsymbol{g}(t)=- \int_{t}^{\infty} exp^{\left[A^{T}-\frac{1}{2}(P+P^T)BR^{-1}B^{T} (\tau -t)\right]} C^{T} Q \boldsymbol{z}(\tau) d \tau
\end{equation}
and the resulting optimal control law is given as:
\begin{equation*}
\boldsymbol{u}^*=- \frac{1}{2} {R}^{-1} B^T \left( P^T +P \right)\boldsymbol{x }+R^{-1}B^T\boldsymbol{g}
\end{equation*}
Similar to the finite-final time case it is easy to verify that the function $q=\boldsymbol{x}^TP\boldsymbol{x}-2\boldsymbol{g}^T\boldsymbol{x}$ is a solving function. In this case too, similar to the case of regulation, the stability of closed loop is ensured if $(P+P^T) \succ 0$.
\end{solution*}

\begin{remark1}
\normalfont
As is clear from the solution methodologies proposed above, the solution of the considered problems does not require the notion of co-sates and any a-prior information regarding the optimal cost function. Rather, whole solution technique depends upon the selection of Krotov function $q(\boldsymbol{x}(t),t)$ which directly affects the nature of equivalent optimization problems.
\end{remark1}

Before demonstrating the developed methodology with numerical examples, a brief overview of Krotov method, which is an iterative method to solve equivalent optimization problem in Theorem \ref{thm2} is provided. For more details refer \cite{Kro95, gurman2016certain} and the references therein.

\subsection{Krotov method} Krotov method is one of the iterative methods to solve the equivalent optimization problems of Theorem \ref{thm2}. The steps of this method are as below:
\begin{enumerate}
\item {Choose \textit{any} admissible control $\boldsymbol{u}_0(t)$ and compute the corresponding admissible process $\boldsymbol{v_0}=\left[\boldsymbol{u}_0(t),\boldsymbol{x}_0(t) \right]$ using the dynamical equation $\dot{\boldsymbol{x}}=f(\boldsymbol{x}(t),\boldsymbol{u}(t),t)$ of the system. Also compute the cost $J_0$. }
\item {\textit{Construct} a function $\phi(\boldsymbol{x}(t),t)$ such that :
\begin{enumerate}
\item $ s(t,\boldsymbol{x}_0(t),\boldsymbol{u}_0(t))= \max_{\boldsymbol{x} \in \mathbb{R}^n } \ s(t,\boldsymbol{x}(t),\boldsymbol{u}(t)) $
\item $ s_f(\boldsymbol{x}_0(t_f)) =\max_{\boldsymbol{x} \in \mathbb{X}_f } \ s_f(\boldsymbol{x}(t_f))$
\end{enumerate}
}
\item Construct the control input $\tilde{\boldsymbol{u}}(t)$ such that:
\begin{equation*}
\tilde{\boldsymbol{u}}(t)= \arg_{\boldsymbol{u}} \min s(t,\boldsymbol{x},\boldsymbol{u})
\end{equation*}
\item Compute the cost of resulting admissible process and compare it with the cost obtained in step $1$ to determine the extent of improvement achieved viz. compare the difference between the two costs with a predefined tolerance, say $\epsilon$. If difference is less than $\epsilon$ go back to step $2$ with improved process obtained in $3$, otherwise stop.
\end{enumerate}
Now, the optimal control problem for a scalar system is solved to clarify the approach of Krotov method.

\begin{problem*}
For the system $\dot{x}(t)=-x(t)+u(t)$ with $x(0)=5$, compute the optimal control law so as to minimize $J= \int_0^{10} x^2(t) +u^2(t) dt $.
\end{problem*}
\begin{solution*}
\normalfont
\begin{enumerate}[i)]
\item Choose an admissible process as :
$u_0(t)= 0$ which gives $x_0(t)= 5$. The corresponding cost is computed to be :
$ J_0=25$.
\item Next a function $\phi$ is chosen so as to maximize:
$s(t,x,u_0)= \frac{\partial \phi} {\partial t} + \frac{\partial q} {\partial x} \left(-x+u \right)+ x^2+u^2 $ over $x$. Upon a quadratic selection of $\phi$ as $\phi=px^2$, the function $s$ is given as :
$s(t,x,u_0)= x^2( \dot{p} -2p+1) $ and $g(x(10))=- p(10)x^2(10)$. Choosing $p$ such that
$\dot{p}-2p+1=0$ and $p(10)=0$ clearly turns out to be appropriate selection for this case.
Then the improved process is given as $v_1= (x_1,u_1)$ where:
$ \dot{x_1}= (-1-p) x_1$ and $p(10)=0$. The corresponding cost is computed to be:
$J_1= 10.42$.
\item Next, the improving function is again selected to be a quadratic function $q=px^2$ and the corresponding equations for $p$ are given as :
\begin{equation*}
\dot{p}-2p+1-p^2=0
\end{equation*}
with $p(10)=0$.
The corresponding improved process is given by:\\
$\dot{x}_2=(-1-p)x_2$ with $u_2=-p x_2$ and the corresponding cost is computed as $J_2=10.36$.
\end{enumerate}
It was found that upon further iterations no improvement of cost was achieved leading to the conclusion that this process is the globally optimal process.
\end{solution*}

\section{Numerical Examples} \label{ne}
In this section, the proposed solution methodology is demonstrated for computing the finite and infinite horizon optimal control laws.
\begin{example} \label{ex1}
For the  scalar system described by the dynamics
$ \dot{x}=-x+u $ with $x(t_0)=x_0$; where $x \in \mathbb{R}$ and $ u \in \mathbb{R}$
find the optimal control law so as to minimize
$ J= \frac{1}{2} \int_{t_0}^{\infty} \big(x^2+u^2\big) dt. $
\end{example}
\begin{solution*}
\normalfont
The optimization problem to be solved as per Theorem \ref{thm1} is as follows:
 \begin{equation*}
 \min\limits_{(x,u) \in \mathbb{R} \times \mathbb{R}}  s({x},{u},t)
 \end{equation*} 
 where
  \begin{equation*}
  s=\frac{\partial q}{\partial t}+\frac{\partial q}{\partial x}[-x+u]+x^2+u^2
  \end{equation*} 
Let the Krotov function be 
\begin{equation*}
q(x)=px^2, \text{ }p>0
\end{equation*}
Then the function $s(x,u,t)$ is given as:
\begin{equation*} \label{s_01}
s(x,u,t)=x^2(-2p+1) +2pxu +u^2
\end{equation*}
This function is nonlinear and non-convex, which is generally solved using an iterative solution procedure for {any} value of $p$. However, using Proposition \ref{prop_1} the direct solution can be obtained by choosing $p$ as so as to satisfy :
\begin{equation*}
\label{p_ineq_1}
2p-p^2+1 \geq 0
\end{equation*}
which gives $0 < p \leq 0.414$.
The plots of this function for different values of $p$ are given in Figure \ref{fig_1}. Finally, to minimize $s(x,u,t)$, $p$ is selected so as to satisfy $2p-p^2+1=0$ yielding $p=0.414$.
\begin{figure}[h]
\begin{subfigure}[b]{0.55\textwidth}
   \includegraphics[width=0.85\linewidth]{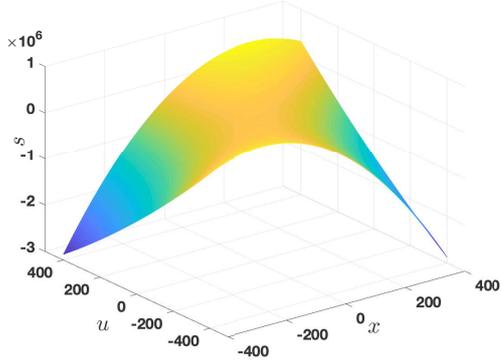}
   \caption{$p=5$}
   \label{fig_1:a} 
\end{subfigure}
\begin{subfigure}[b]{0.55\textwidth}
   \includegraphics[width=0.85\linewidth]{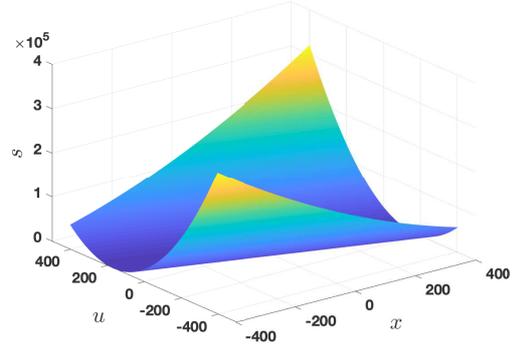}
   \caption{$p=0.414$}
   \label{fig_1:b}
\end{subfigure}
\caption{Plots of the function $s$ for different values of $p$ (Example \ref{ex1})}
\label{fig_1}	
\end{figure}
Thus, the optimal control law is computed using Corollary  \ref{cor_1} as
\begin{equation*}
u^*(t)=-px^*(t)= -0.414 x^*(t).
\end{equation*}
\end{solution*}

\begin{example} \label{ex2}
For the system described by the dynamics:
$\dot{x}=ax+bu$ with $x(t_0)=x_0$ find optimal control law so that output
$y=cx$ tracks the reference trajectory $z=\alpha sin(\omega t)$ and  the performance functional given by $J= \frac{1}{2} \int_{t_0}^{\infty} \big(me^2+nu^2\big) dt $
is minimized. Here, $e$ is the error defined as $e\triangleq z-y$. Here $x \in \mathbb{R}$, $ u \in \mathbb{R}$ and $e \in \mathbb{R}$.
\end{example}
\begin{solution*}
\normalfont
The optimization problem which is to be solved is given as:
\begin{equation*}
\min\limits_{(x,u) \in \mathbb{R}^n \times \mathbb{R}^m}  s({x},{u},t)
\end{equation*}
, where
  \begin{equation*}
  s=\frac{\partial q}{\partial t}+\frac{\partial q}{\partial x}[ax+bu]+me^2+nu^2
  \end{equation*} 
We use the Krotov function as :
\begin{equation*} \label{q_lqt}
q(x,u) =px^2 - 2 gx   \text{ where }  p  \succ 0
\end{equation*}
Then, the function $s(x,u,t)$ is given as:
\begin{align*}
s= -2 \dot{g}x+&2apx^2+2pbxu+-2gax-2gbu \\
&+mz^2+mc^2x^2-2mcxz+nu^2
\end{align*}
Clearly, the characteristics of the function $s(x,u,t)$ depend upon $p$ and $g$ and in general it is a non-convex function.
Again, it is easily verified that the function $s(x,u,t)$ is indeed convex for the selection which satisfies the conditions of Proposition \ref{prop_2}. 
Specifically,
\begin{enumerate}
\item $p$ is selected such that 
\begin{align*}
&2apn-{p^2b^2}+mnc^2 \geq 0; \ p>0
\end{align*}
\item $g(t)$ is a time varying function which can be computed as the steady state solution of the vector differential equation in Proposition \ref{prop_2}:
\begin{align*}
g(t)&= \alpha c m \int_{t}^{\infty} e^{(a-\frac{pb^2}{n})(\tau -t)} sin(\omega \tau) d \tau \\
&= \beta  \left[ -(an-{pb^2}) sin (\omega t) +n \omega cos(\omega t) \right ]
\end{align*}
where $\beta =\dfrac{\alpha   c m n}{\left ( na- {b^2p}\right)^2+n^2\omega^2}$
\end{enumerate}
Finally, the optimal control law is given as :
\begin{align*}
u^*=  -\frac{bpx^*}{n} + \frac{b}{n} \beta  \left[ -(an-{pb^2}) sin (\omega t)+n \omega cos(\omega t) \right].
\end{align*}
\begin{figure}
\begin{subfigure}[b]{0.55\textwidth}
   \includegraphics[width=0.85\linewidth,  height=1000mm, keepaspectratio]{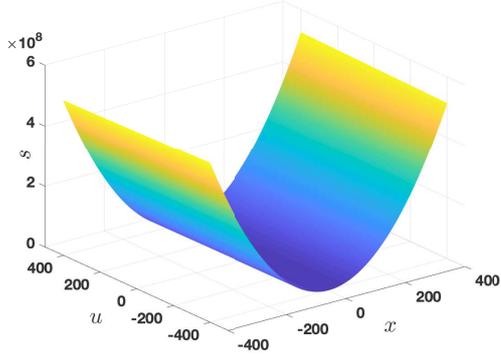}
   \caption{Plot of s function for $ p=17.98 $ (Example \ref{ex2})}
   \label{fig_s:a} 
\end{subfigure}
\begin{subfigure}[b]{0.55\textwidth}
   \includegraphics[width=0.85\linewidth]{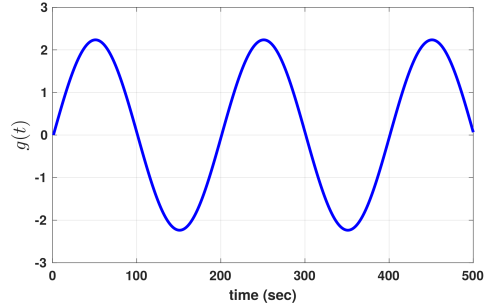}
   \caption{Plot of the function $ g(t) $}
   \label{fig_s:b}
\end{subfigure}
\caption{Plots of $s$ and $g(t)$}
\label{fig_2}	
\end{figure}
The numerical values considered for simulation purposes 
are 
in Table \ref{t_1}, where for positive values of $ a $ the system is unstable. For these values the range of $p$ to ensure convexity of $s$ as per Proposition \ref{prop_2} is computed as $0<p \leq 17.98$. Following Corollary \ref{cor_2}, $p$  is taken as $17.98$ to obtain the global optimal control law. The plot of function $s$ for $p=17.98$ is shown in Figure \ref{fig_s:a} which clearly shows convexity of $s$. The boundedness of function $g(t)$ is clear from Figure \ref{fig_s:b}. Finally, closed loop response tracking the given reference trajectory is shown in Figure \ref{fig_op}. 

\begin{figure}[h]
   \includegraphics[width=1\linewidth, height=1000mm, keepaspectratio]{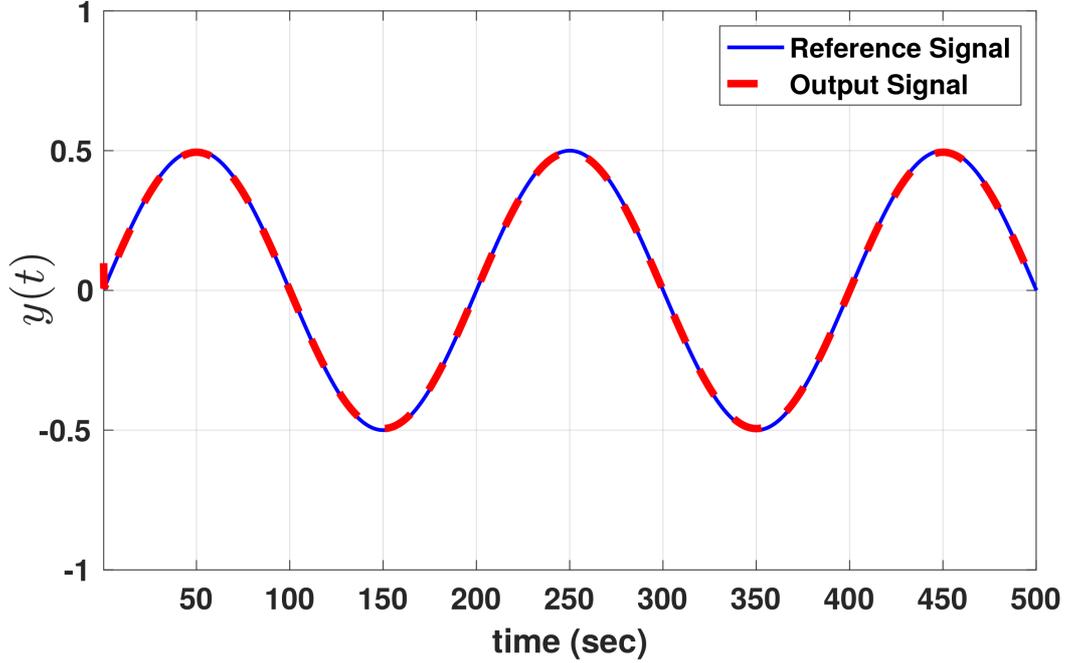}
\caption{Closed-Loop Response (Example \ref{ex2})}
\label{fig_op}	
\end{figure}
\begin{table}
\centering
\begin{tabular} {|c|c|c|c|}
\hline
\textbf{S. No.} &\textbf{Parameter}&\textbf{Value}&\textbf{Remarks}\\
\hline
$1$&$a$ & $1$ & Open loop unstable system\\
\hline
$2$&$b$& $1$ & -\\
\hline
$3$&$c$& $4$ & -\\
\hline
$4$&$\omega$ & $0.01 \pi$ & Angular frequency of reference \\
\hline
$5$&$\alpha$ & $0.5$ & Amplitude of reference signal \\
\hline
$6$&$m$ & $200$ & Weight assigned to tracking error\\
\hline
$7$&$n$ & $0.1$ & Weight assigned to control  input \\
\hline
$8$&$x_0$& 2& Initial condition\\
\hline
\end{tabular}
\caption{Parameters used in Simulation Results for Example \ref{ex2}}
\label{t_1}
\end{table}
\end{solution*}

\begin{example} \label{ex3}
Compute the optimal control law for the system 
$ \dot{x}= -\left( \frac{1}{t+1} \right)x +u $
which minimizes the performance index 
$ J= 0.5 x^2(t_f) + 0.5 \int_{0}^{t_f} \left( x^2 +u^2\right) dt $
with $t_f=5$ and $x(0)=20$.
\begin{solution*}
\normalfont
Following Theorem \ref{thm2}, the equivalent 
optimization problem to be solved is given as:
\begin{align} 
&\min\limits_{({x},{u}) \in \mathbb{R} \times \mathbb{R}} s({x},{u},t) \label{ex_1_siso_lqr_11} \\
& \min\limits_{{x(t_f)} \in \mathbb{R}} s_f({x(t_f)},t_f) \label{ex_1_siso_lqr_21}
\end{align} 
where 
$ s= \frac{\partial q}{\partial t}+ \frac{\partial q}{\partial x}\left[ \left( -\frac{1}{t+1} \right)x +u \right]+x^2+u^2 $ and
$ s_f= x^2(t_f)-q(x(t_f),t_f) .$
According to Proposition \ref{prop_1}, the Krotov function is chosen as:
\begin{equation*}
q=p(t)x^2
\end{equation*}
then the functions $s$ and $s_f$ are read as:
\begin{align*}
&s= \dot{p}(t)x^2-\left(\frac{2p(t)x^2}{t+1}\right)+2 p(t)xu+x^2+u^2 \\
&s_f= x^2(t_f) -p(t_f) x^2(t_f)
\end{align*}
Clearly the function $s$ is nonlinear and non-convex. Due to this, the equivalent problems \eqref{ex_1_siso_lqr_11}-\eqref{ex_1_siso_lqr_21} are solved using an iterative method, such as Krotov method in which $q$ is chosen appropriately at each iteration. Instead, using Proposition \ref{prop_1} the direct solution can be computed by choosing $p(t)$ such that:
\begin{equation*}
\dot{p}(t)- \left(\frac{2}{t+1} \right)p(t) -p^2(t) +1 \geq 0 \text{, } 1- p(5) \geq 0 \ \forall \ t
\end{equation*}
Finally, using Corollary \ref{cor_1}, $p(t)$ is chosen such that $\dot{p}(t)- \left(\frac{2}{t+1} \right)p(t) -p^2(t) +1 =0$ and $p(5)=1$. The solution of the above differential equation is given as
\begin{equation*}
p(t)= \frac{(2.423*10^5)t - t e^{2t}+ (4.846*10^5)}{(2.423*10^5)t + e^{2t} + te^{2t} + (2.423*10^5)}
\end{equation*}
and the optimal control law is computed as 
\begin{equation*} 
u^*= -p(t) x^*
\end{equation*}
The plots of $p(t)$, the state response of the closed-loop system and the optimal control input are shown in Figures \ref{p_fin_siso_reg} and \ref{x_fin_siso_reg} respectively.
\end{solution*}

\begin{figure}
\centering
\begin{minipage}{.5\textwidth}
\centering
\includegraphics[width=1\linewidth]{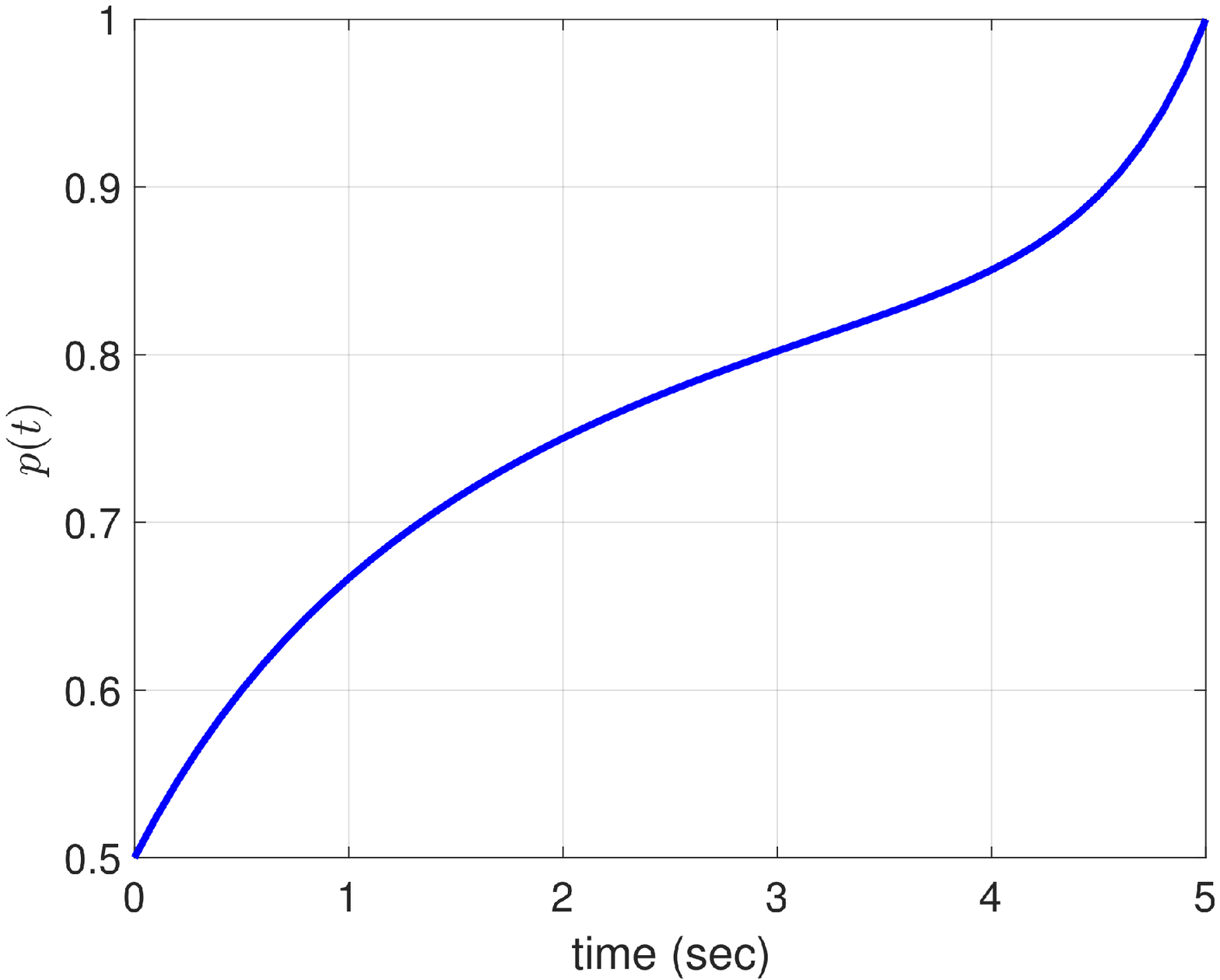}
\subcaption{Plot of $p(t)$}
\label{p_fin_siso_reg}
\end{minipage}%
\begin{minipage}{.5\textwidth}
\centering
\includegraphics[width=1\linewidth]{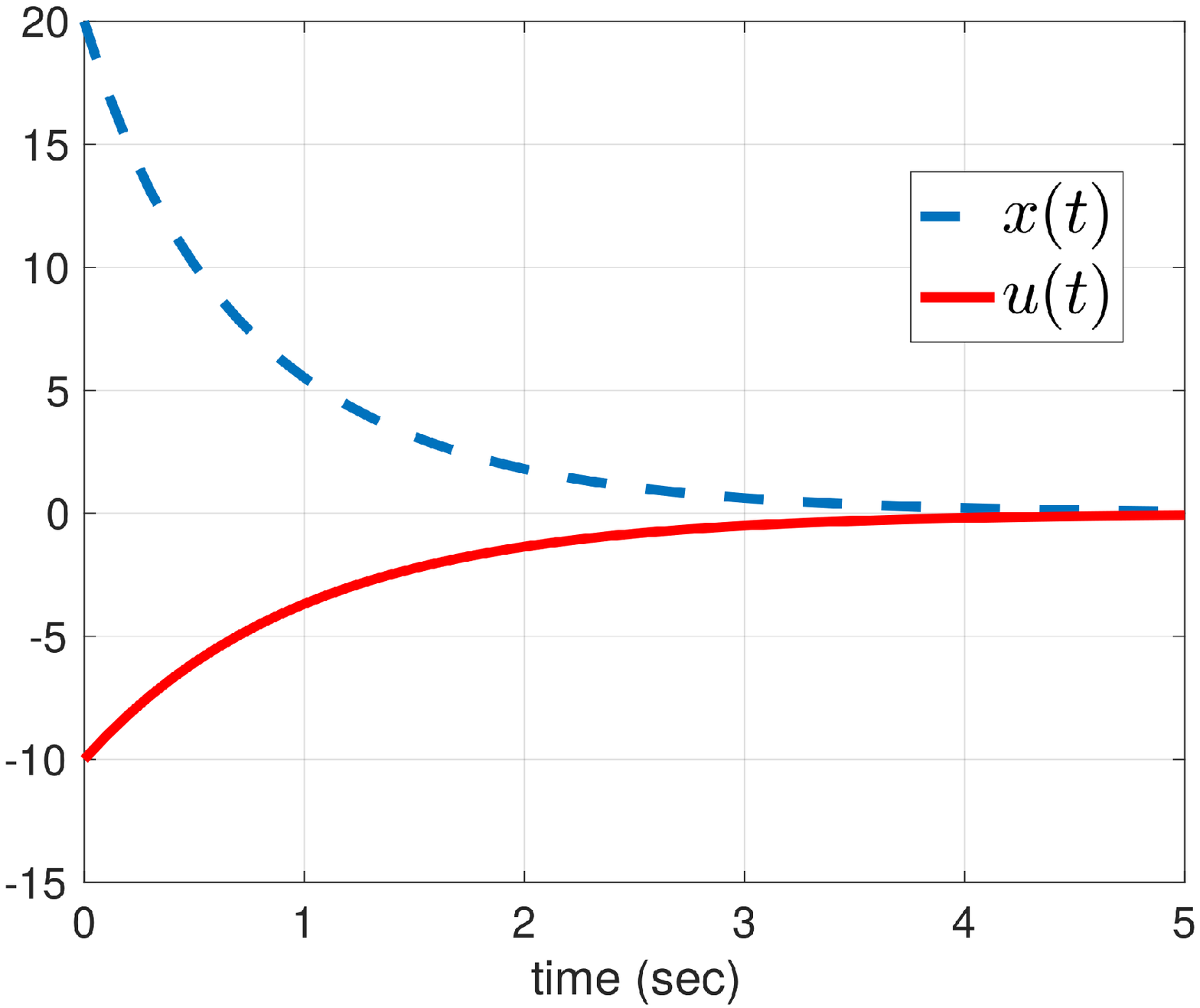}
\subcaption{Optimal control input and closed loop response}
\label{x_fin_siso_reg}
\end{minipage}
\caption{Plots of $p(t)$, control input and closed loop response for Example \ref{ex3}}
\end{figure}
\end{example}

\begin{example} \label{ex4}
Compute the optimal control law for the system given as:
$ \dot{x}= -\left[ \frac{1}{t+1} \right] x+u $
which minimizes the performance index 
$ J= 0.5 \left[10 e^2(t_f) + \int_{0}^{t_f} \left(1000e^2 +u^2 \right) dt \right] $
with $e(t) \triangleq z(t)-y(t)$, $z(t)$ is the reference trajectory- $z(t)=t$, $t_f=5$ and $x(0)=10$.
\begin{solution*}
\normalfont
The optimization problem to be solved is given as:
\begin{align*} 
&\min\limits_{({x},{u}) \in \mathbb{R} \times \mathbb{R}} s({x},{u},t) \\
& \min\limits_{{x(t_f)} \in \mathbb{R}} s_f({x(t_f)},t_f) 
\end{align*} 
where 
\begin{align*}
s= &\frac{\partial q}{\partial t}+ \frac{\partial q}{\partial x}\left[ \left( -\frac{1}{t+1} \right)x +u \right]+1000e^2+u^2 \text{ and }\\
& s_f= 10e^2(t_f)-q(x(t_f),t_f)
\end{align*}
Let the Krotov function be chosen as:
\begin{equation*}
q=p(t)x^2-2g(t)x
\end{equation*}
then the functions $s$ and $s_f$ are given as below:
\begin{align*}
&s=\dot{p}(t)x^2-2\dot{g}(t)x- \frac{2p(t)x^2}{t+1}+2p(t)xu+\frac{2g(t)x}{t+1}-2g(t)u+1000t^2+1000x^2-2000zx \\
&s_f= 10x^2(t_f)+10z^2(t_f)-2x(t_f)z(t_f) -p(t_f)x^2(t_f)+2g(t_f)x(t_f)
\end{align*}
Clearly the functions $s$ and $s_f$ are nonlinear and non-convex. Again, using Proposition \ref{prop_2} the direct solution can be computed by choosing $p(t)$ and $g(t)$ such that:
\begin{equation*}
\dot{p}- \left(\frac{2}{t+1} \right)p -p^2 +1000 \geq 0 \text{and } 10- p(5) \geq 0 \forall \ t
\end{equation*}
and
\begin{equation*}
\dot{g}(t)-\frac{1}{t+1} g(t)+1000t- pg=0 \text{ with } g(5)=50
\end{equation*}
Finally, using Corollary \ref{cor_1}, $p$ is chosen such that $\dot{p}- \left(\frac{2}{t+1} \right)p -p^2 +1000 = 0 \text{ and } 10- p(5) = 0 $ and $\dot{g}(t)-\frac{1}{t+1} g(t)+1000t- pg=0 \text{ with } g(5)=50$. The obtained $p(t)$ and $g(t)$ are as shown in Figure \ref{fig_p_n_g_siso_trac}.

\begin{figure}
\centering
\begin{minipage}{.5\textwidth}
\centering
\includegraphics[width=1\linewidth]{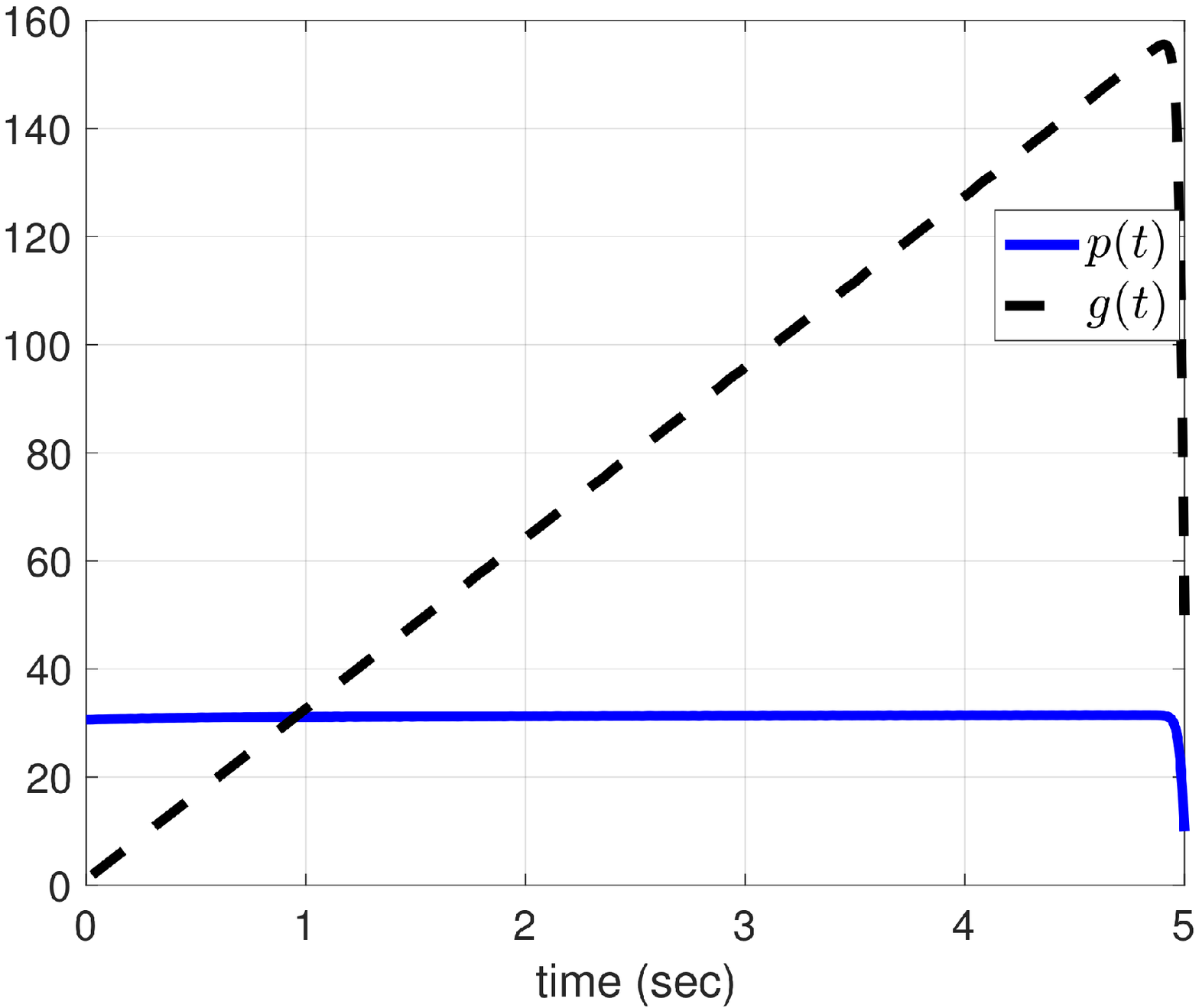}
\subcaption{Plots of $p(t)$ and $g(t)$}
\label{fig_p_n_g_siso_trac}
\end{minipage}%
\begin{minipage}{.5\textwidth}
\centering
\includegraphics[width=1\linewidth]{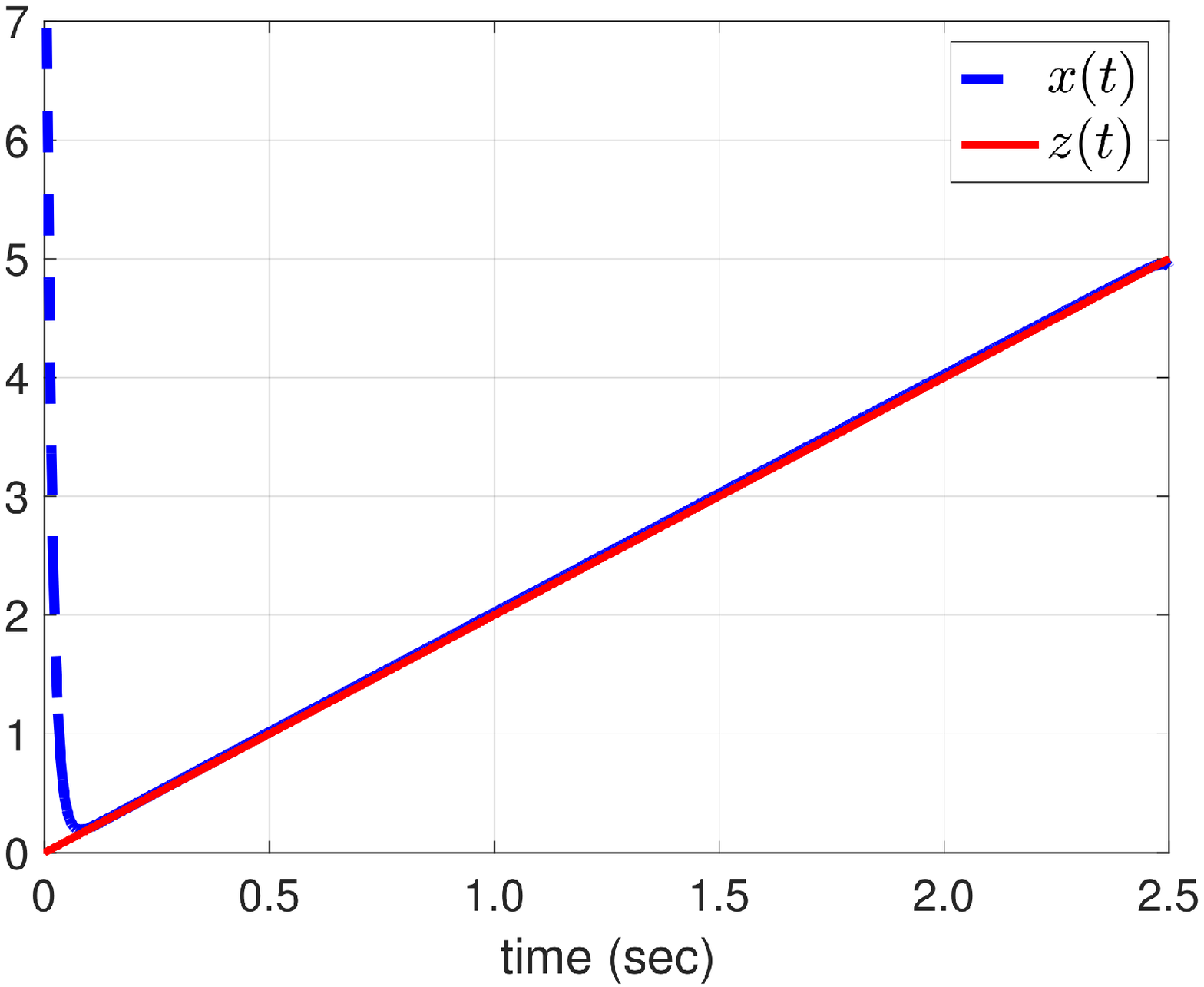}
\subcaption{Closed loop response}
\label{x_n_z_fin_siso_trac}
\end{minipage}
\caption{Plots of $p(t)$, $g(t)$ and closed loop response for Example \ref{ex4}}
\end{figure}

Finally, the optimal control law is computed as 
$ u^*= -p(t) x^*+g(t)$. The closed loop response is as shown in Figure \ref{x_n_z_fin_siso_trac}.
\end{solution*}
\end{example}
\normalfont{
Next, infinite-horizon optimal control problems for MIMO LTI system are considered.}
\begin{example} \label{ex5}
For the MIMO system:
$\dot{\boldsymbol{x}}= A \boldsymbol{x}+ B\boldsymbol{u} $
compute an optimal control law to minimize the performance index:
$ J= 0.5\int_{0}^{\infty} \left(\boldsymbol{x}^T Q \boldsymbol{x} + \boldsymbol{u}^T R \boldsymbol{u}\right) \ dt $
where
$ A= \begin{bmatrix}
0 & 1 \\
1 & 1
\end{bmatrix}
$,
$ B= \begin{bmatrix}
1& 1 \\
0 & 1
\end{bmatrix}
$,
$ Q= \begin{bmatrix}
2& 0 \\
0 & 4
\end{bmatrix}
$
and
$ R= \begin{bmatrix}
0.5 & 0 \\
0& 0.25
\end{bmatrix}
$. Also, $\boldsymbol{x}(0) =\begin{bmatrix} 
10 &
5
\end{bmatrix}^T$.
\end{example}
\begin{solution*}
\normalfont
The optimization problem to be solved is given as:
\begin{equation*}
\min\limits_{(\boldsymbol{x},\boldsymbol{u}) \in \mathbb{R}^2 \times \mathbb{R}^2} s(\boldsymbol{x},\boldsymbol{u},t)
\end{equation*}
where
\begin{align*}
s=\frac{\partial q}{\partial t}+ \frac{\partial q}{\partial \boldsymbol{x}}
\left(
\begin{bmatrix}
0 & 1 \\
1 & 1
\end{bmatrix}
x+
\begin{bmatrix}
1 & 1 \\
0 & 1
\end{bmatrix}
\boldsymbol{u} \right)+\left(
\boldsymbol{x}^T
\begin{bmatrix}
2& 0 \\
0 & 4
\end{bmatrix}
\boldsymbol{x} +\boldsymbol{u}^T
\begin{bmatrix}
0.5 & 0 \\
0 & 0.25
\end{bmatrix}
\boldsymbol{u}
\right)
\end{align*} 
This function is non-convex and nonlinear. However, as shall be demonstrated the function can be convexified if $q(\boldsymbol{x},t)$ is chosen as per Proposition \ref{prop_1}.\\
Let the Krotov function be 
\begin{equation*}
q(x)=\boldsymbol{x}^TP\boldsymbol{x}, P= \begin{bmatrix}
p_{11}& p_{12}\\ p_{21} & p_{22}
\end{bmatrix}
\end{equation*}
Then the function $s(\boldsymbol{x},\boldsymbol{u},t)$ is given as:
\begin{align*}
s(\boldsymbol{x},\boldsymbol{u},t)&= \boldsymbol{x}^T (P+P^T) \left\{
\begin{bmatrix}
0 & 1 \\
1 & 1
\end{bmatrix}
\boldsymbol{x}+
\begin{bmatrix}
1 & 1 \\
0 & 1
\end{bmatrix}
\boldsymbol{u} \right\}
+ 2 x_1^{2}+4 x_2^{2}+ 0.5 u_1^{2}+0.25 u_2^{2} \\
&=x_1^2 \left(p_{12}+p_{21}+2 \right)+x_2^2 \left( p_{12}+p_{21}+2p_{22}+4\right)+ x_1x_2\left(2p_{11}+p_{12}+p_{21}+2p_{22} \right) +2u_1x_1 \left( p_{11}\right)\\
&\hspace{1 cm} +u_2 x_1 \left(2p_{11}+p_{12}+p_{21} \right)+ u_1x_2 \left( p_{12} +p_{21}\right)+ u_2 x_2 \left( p_{12}+p_{21}+2p_{22}\right)+0.5 u_1^{2}+0.25 u_2^{2}
\end{align*}
Next, using Proposition \ref{prop_1} and Corollary \ref{cor_1}, the direct solution can be obtained by choosing $P$ so as to satisfy 
\begin{equation} \label{ex_3_p_eqn}
PA+A^TP-\frac{1}{2} PBR^{-1}B^{T}P -\frac{1}{4} PBR^{-1}B^{T}P^T - \frac{1}{4} P^TBR^{-1}B^{T}P = 0
\end{equation}

Equation \eqref{ex_3_p_eqn} admits the four solutions:
$P_1= \begin{bmatrix} 
9.4172 & 6.0671 \\
6.8083 & -15.095
\end{bmatrix} $,
$P_2=
\begin{bmatrix}
-5.7559 & 12.756 \\
11.318 & -6.5319
\end{bmatrix}
$,
$P_3=
\begin{bmatrix}
5.7575 & -10.427 \\
-11.654 & 5.0354
\end{bmatrix}
$
and
$P_4=
\begin{bmatrix}
8.6906 & -5.9759 \\
-5.2647& 15.05
\end{bmatrix}.
$

Finally, to ensure the stability of the closed loop, $P$ is chosen such that Lemma \ref{lm_1} is satisfied i.e. $(P+P^T) \succ 0$. It can be easily verified that only $P_4$ satisifies this requirement. Thus, $P_4$ is the required value of matrix $P$ in this case.
Finally, the optimal control law is given as:
\begin{align*}
\boldsymbol{u}^{*}&= - \frac{1}{2}R^{-1} B^{T} (P+P^T) \boldsymbol{x}^{* }\\
&= \begin{bmatrix}
-1.2887 x_1^{*}+ 0.4267 x_2^{*} \\
-1.7240 x_1^{*}+ -5.0787x_2^{*}
\end{bmatrix}
\end{align*}
\end{solution*}
The closed loop response is shown in Figure \ref{res_inf_mimo_reg}.

\begin{figure}
\centering
\begin{minipage}{.5\textwidth}
\centering
\includegraphics[width=1 \linewidth]{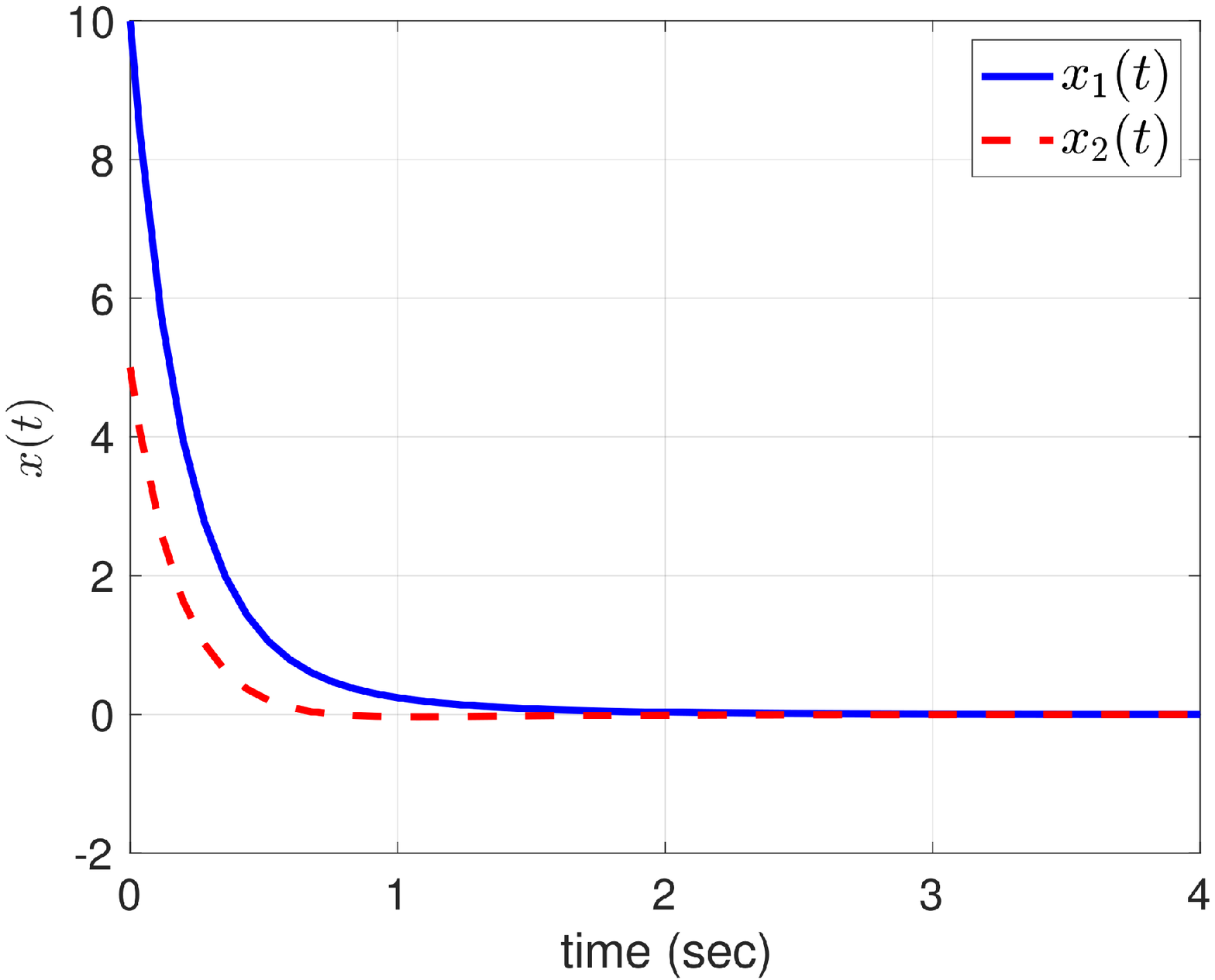}
\subcaption{Example \ref{ex5}-Regulation}
\label{res_inf_mimo_reg}
\end{minipage}%
\begin{minipage}{.5\textwidth}
\centering
\includegraphics[width=1\linewidth]{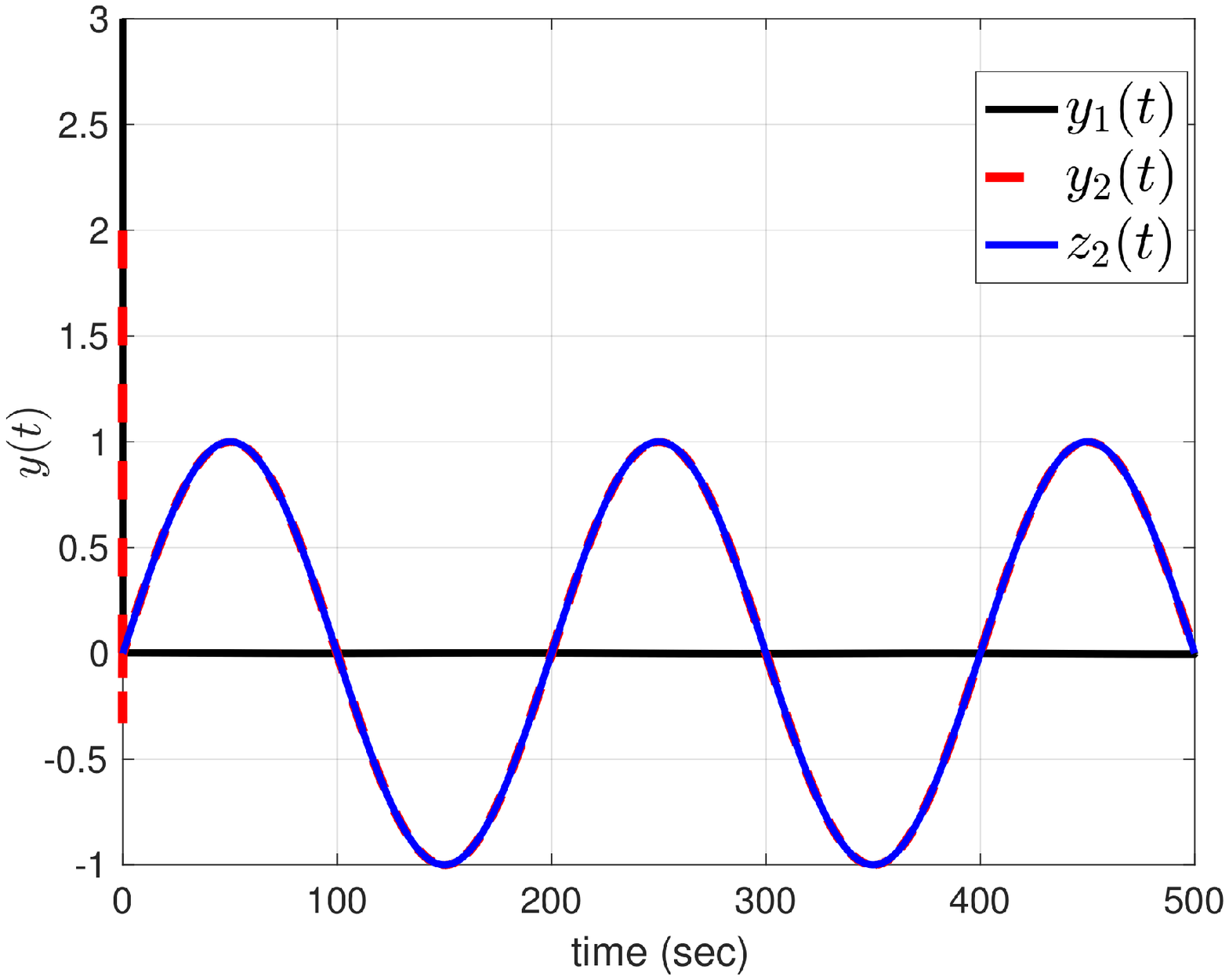}
\subcaption{Example \ref{ex6}-Tracking}
\label{res_inf_mimo_trac}
\end{minipage}
\caption{Closed loop response for Example \ref{ex5} and Example \ref{ex6}}
\label{closlooprestrac}
\end{figure}

\begin{example} \label{ex6}
For the MIMO system:
$\dot{\boldsymbol{x}}= A \boldsymbol{x}+ B\boldsymbol{u} $; $\boldsymbol{y}=C\boldsymbol{x}$
compute an optimal control law to minimize the performance index:
$
J= 0.5\int_{0}^{\infty} \boldsymbol{e}^T Q \boldsymbol{e} + \boldsymbol{u}^T R \boldsymbol{u} \ dt
$
where
$ A= \begin{bmatrix}
0 & 1 \\
1 & 1
\end{bmatrix}
$,
$ B= \begin{bmatrix}
1& 1 \\
0 & 1
\end{bmatrix}
$,
$ C= \begin{bmatrix}
1& 0 \\
0 & 1
\end{bmatrix}
$
$ Q= \begin{bmatrix}
200& 0 \\
0 & 400
\end{bmatrix}
$
and
$ R= \begin{bmatrix}
0.5 & 0 \\
0& 0.25
\end{bmatrix}
$
where $\boldsymbol{e}$ is the error defined as $\boldsymbol{e}=\boldsymbol{y}-\boldsymbol{z}$ and $\boldsymbol{z}$ is the reference defined as $\boldsymbol{z}= \begin{bmatrix} 0 \\ sin(\omega t) \end{bmatrix}$ with $\omega =0.01\pi$. Also, $\boldsymbol{x}(0)=
\begin{bmatrix}
5 & 2
\end{bmatrix}^T$.
\end{example}
\begin{solution*}
\normalfont
The optimization problem to be solved is given as:
\begin{equation*}
\min\limits_{(\boldsymbol{x},\boldsymbol{u}) \in \mathbb{R}^2 \times \mathbb{R}^2} s(\boldsymbol{x},\boldsymbol{u},t)
\end{equation*}
where
\begin{align*}
s=\frac{\partial q}{\partial t}+ \frac{\partial q}{\partial \boldsymbol{x}}
\left\{
\begin{bmatrix}
0 & 1 \\
1 & 1
\end{bmatrix}
\boldsymbol{x}+
\begin{bmatrix}
1 & 1 \\
0 & 1
\end{bmatrix}
\boldsymbol{u} \right\}+\left\{
\boldsymbol{e}^T
\begin{bmatrix}
200& 0 \\
0 & 400
\end{bmatrix}
\boldsymbol{e} +\boldsymbol{u}^T
\begin{bmatrix}
0.5 & 1 \\
0 & 0.25
\end{bmatrix}
\boldsymbol{u}
\right\}
\end{align*} 
We use the Krotov function as 
\begin{equation*}
q(\boldsymbol{x},t)= \boldsymbol{x}^TP\boldsymbol{x}-2\boldsymbol{g}^T\boldsymbol{x}; P= \begin{bmatrix} p_{11} &p_{12} \\ p_{21} & p_{22} \end{bmatrix};\boldsymbol{g}(t)= \begin{bmatrix} g_1(t) \\g_2(t) \end{bmatrix}
\end{equation*}
Then, the function $s(\boldsymbol{x},\boldsymbol{u},t)$ is given by:
\begin{align*}
s=&x_1^2 \left(p_{12}+p_{21}+200 \right)+x_2^2 \left( p_{12}+p_{21}+2p_{22}+400 \right)+ x_1x_2\left(2p_{11}+p_{12}+p_{21}+2p_{22} \right) +2u_1x_1 \left( p_{11}\right)\\
&\hspace{0.5 cm} +u_2 x_1 \left(2p_{11}+p_{12}+p_{21} \right)+ u_1x_2 \left( p_{12} +p_{21}\right)+ u_2 x_2 \left( p_{12}+p_{21}+2p_{22} \right) -2g_1x_2-2g_1u_1-2g_1u_2\\
&\hspace{0.7 cm}-2g_2 x_1 -2g_2 x_2 -2 g_2 u_2+400 sin^2(\omega t) 
\end{align*}
Again, it is easily verified that the function $s(\boldsymbol{x},\boldsymbol{u},t)$ is indeed convex for the selection of $P$ and $\boldsymbol{g}(t)$ as in Corollary \ref{cor_2}. $P$ is computed using \eqref{ex_3_p_eqn} similar to the case of regulation and {$\boldsymbol{g}(t)$ is a time varying function which can be computed as the steady state solution as in \eqref{g_lqt_eqn}
\begin{equation*}
\boldsymbol{g}(t)=- \int_{t}^{\infty} exp^{\left[A^{T}-\frac{1}{2}(P+P^T)BR^{-1}B^{T} (\tau -t)\right]} C^{T} Q \boldsymbol{z}(\tau) d \tau
\end{equation*}}
$P$ matrix is choosen to be (similar to the case of regulation) i.e.
$P=
P_4
$
and $\boldsymbol{g}(t)$ is calculated to be:
\begin{equation*}
\boldsymbol{g}(t)=\begin{bmatrix} -2.857sin(0.0314t) + 0.003cos(0.0314t) \\ 0.0039cos(0.031t) - 5.68sin(0.031t)\end{bmatrix}
\end{equation*}

The plots of $g_1(t)$ and $g_2(t)$ are shown in Figure \ref{gplotstrac} which clearly show their boundedness. Finally, the optimal control law is calculated using Corollary \ref{cor_2} and the closed loop response is given in Figure \ref{res_inf_mimo_trac}.

\begin{figure}
\centering
\centering
\includegraphics[width=1 \linewidth]{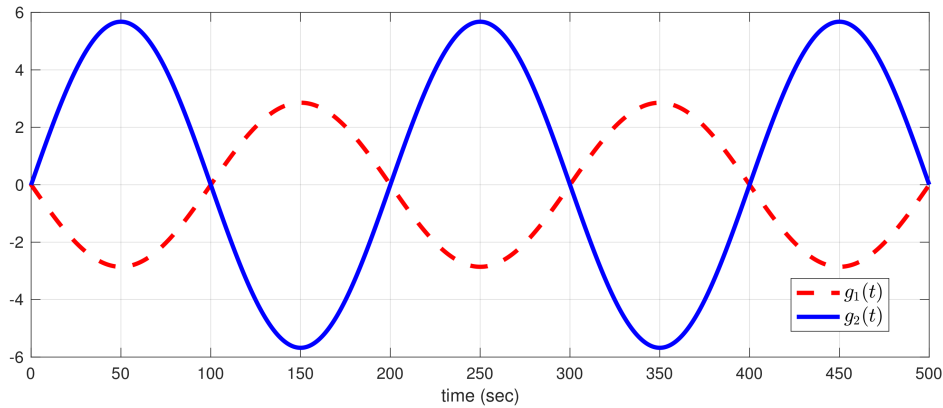}
\caption{Plots of $g_1(t)$ and $g_2(t)$ for Example \ref{ex6}}
\label{gplotstrac}
\end{figure}

\end{solution*}
\section{Conclusion} \label{conc}
In this paper, we propose a novel method to compute globally optimal control law for linear quadratic regulation and tracking problems based on Krotov sufficient conditions. The solution to the linear optimal control problem has been widely addressed in the literature using the celebrated CoV/HJB methods. These methods synthesize the global optimal control law, which is unique and requires some forced assumptions. In order to address this issue, we solved the optimal control problem using Krotov sufficient conditions, which does not require the require the notion of co-states (and hence the related assumptions), and the existence of the continuously differentiable optimal cost function.The idea behind Krotov formulation is that the original optimal control problem is translated into another equivalent optimization problem utilizing the so-called extension principle. The resulting optimization problem is highly nonlinear and non-convex, which is generally solved using iterative methods \cite{HalAgr17,Raf18} to yield the globally optimal solution. The angle of our attack is to compute a non-iterative solution, which is achieved by imposing convexity conditions on the equivalent optimization problem. As a byproduct, the selection of Krotov function now becomes very crucial, which shall be addressed in the future specifically for nonlinear and constrained optimal control problems. Finally, this work may serve as  background for further exploration and exploitation of the Krotov conditions for addressing more complex optimal control problems viz. non linear and distributed optimal control problems.

\bibliography{kro_ref}
\bibliographystyle{ieeetr}
\end{document}